\def\RR{{\mathbb R}}
\def\poly{\textrm{poly}}
\def\aa{{\varepsilon}}
\def\Pcal{\mathcal{P}}
\newtheorem{theorem}{Theorem}[section]
\newtheorem{proposition}[theorem]{Proposition}
\newtheorem{corollary}[theorem]{Corollary}
\newtheorem{lemma}[theorem]{Lemma}
\theoremstyle{definition}
\newtheorem{remark}[theorem]{Remark}
\newtheorem{example}[theorem]{Example}
\newtheorem{definition}[theorem]{Definition}
\newtheorem{problem}[theorem]{Problem}
\numberwithin{equation}{section}
\begin{document}

\title[Subtraction-free complexity]{Subtraction-free complexity,\\[.05in]
cluster transformations,
and spanning trees}

\author{Sergey Fomin}
\address{Department of Mathematics, University of Michigan, 530 Church
  Street, Ann Arbor, MI 48109-1043, USA}
\email{fomin@umich.edu}
 \urladdr{http://www.math.lsa.umich.edu/$\tilde{\ }$fomin/}

\author{Dima Grigoriev}
\address{CNRS, Math\'ematiques, Universit\'e de Lille, Villeneuve
  d'Ascq, 59655, France}
\email{Dmitry.Grigoryev@math.univ-lille1.fr}
\urladdr{http://en.wikipedia.org/wiki/Dima$\underbar{\ }$Grigoriev}

\author{Gleb Koshevoy}
\address{Central Institute of Economics and Mathematics, Nahimovskii
  pr.~47, Moscow 117418, Russia}
\email{koshevoy@cemi.rssi.ru}
\urladdr{http://mathecon.cemi.rssi.ru/en/koshevoy/}

\thanks{Communicated by Peter B\"urgisser}

\thanks
{\emph{2010 Mathematics Subject Classification}
Primary 68Q25, 
Secondary 05E05, 
13F60. 
}

\thanks{We thank the Max-Planck Institut f\"ur Mathematik 
for its hospitality during the writing of this paper.
Partially supported by NSF grant DMS-1101152 (S.~F.),
RFBR/CNRS grant 10-01-9311-CNRSL-a,
and MPIM (G.~K.).}

\date{Submitted August 27, 2013. Revised September 22, 2014.}

\begin{abstract}
Subtraction-free computational complexity is the version of arithmetic
\hbox{circuit} complexity
that allows only three operations: addition,
multiplication, and~division.

We use cluster transformations to design efficient subtraction-free algorithms
for computing Schur functions and their skew, double, and
supersymmetric \hbox{analogues,}
thereby generalizing earlier results by P.~Koev.

We develop such algorithms for computing generating functions of spanning trees,
both directed and undirected.
A comparison to the lower bound due to \hbox{M.~Jerrum} and M.~Snir
shows that in subtraction-free computations,
``division can be exponentially powerful.''

Finally, we give a simple example where the gap between ordinary and subtraction-free complexity is exponential.
\end{abstract}

\keywords{Subtraction-free, arithmetic circuit,
Schur function, spanning tree, cluster transformation, star-mesh transformation}

\ \vspace{-0.2in}

\maketitle

\tableofcontents

\section*{Introduction}

This paper is motivated by the problem of dependence of
algebraic complexity 
on the set of allowed operations.
Suppose that a rational function~$f$ can in principle be computed
using a restricted set of arithmetic operations
$M\subset\{+,-,*,/\}$;
how does the complexity of~$f$ (i.e., the minimal number of steps in
such a computation) depend on the choice~of~$M$?
For example, let $f$ be a polynomial with nonnegative coefficients;
then it can be computed
without using subtraction
(we call this a \emph{subtraction-free} computation).
Could this restriction dramatically alter the complexity of~$f$?
What if we also forbid using division?


One natural test 
is provided by the
\emph{Schur functions} and their various generalizations. \pagebreak[3]
Combinatorial descriptions of these polynomials are quite
complicated, and the (non\-negative) coefficients in their monomial expansions
are known to be hard to compute. \pagebreak[3]
On~the other hand, well-known determinantal formulas for
Schur functions yield fast (but not subtraction-free) algorithms for
computing them.

In fact, one \emph{can} compute a Schur function in
polynomial time without using sub\-traction.
An outline of such an algorithm was first proposed by P.~Koev~\cite{Koev-2007}
in~2007.
In this paper, we describe an alter\-native algorithm utilizing
the machinery of \emph{cluster transformations},
a family of subtraction-free rational maps that play a key
role in the theory of cluster algebras~\cite{FZ}.
We then further develop this approach to obtain
subtraction-free polynomial algorithms for computing
\emph{skew}, \emph{double}, and \emph{supersymmetric} Schur functions.


We also look at another natural class of polynomials:
the generating functions of spanning trees
(either directed or undirected)
in a connected (di)graph with weighted edges.
We use \emph{star-mesh transformations} to develop subtraction-free
algorithms that compute these generating functions in polynomial time.
In the directed case, this sharply contrasts with the exponential lower bound
due to M.~Jerrum and M.~Snir~\cite{Jerrum-Snir}
who showed that if one only allows additions and multiplications
(but no subtractions or divisions),
then the arithmetic circuit complexity of the generating
function for directed spanning~trees in an \hbox{$n$-vertex} complete digraph
grows exponentially in~$n$.
We thus obtain an \linebreak[3]
expo\-nential gap between subtraction-free and semiring complexity,
which can be informally expressed by saying that in the absence of subtraction,
division can be ``exponentially powerful''
(cf.\ L.~Valiant's result~\cite{Valiant} on the power of subtraction).
Recall that if subtraction~is allowed,
then division gates can be eliminated at polynomial cost,
as shown by V.~Strassen~\cite{Strassen}.
One could say that forbidding subtraction can dramatically
increase the power of division.

Jerrum and Snir \cite{Jerrum-Snir} have shown that their exponential lower bound
also holds in the \emph{tropical semiring}
$(\RR,+,\min)$
(see, e.g., \cite[Section~8.5]{Litvinov} and references therein).
Since our algorithms extend straightforwardly into the tropical setting,
we conclude that the circuit complexity of the
\emph{minimum cost arborescence} problem
drops from exponential to polynomial as one passes from the tropical semiring
to the \emph{tropical semifield} $(\RR,+,-,\min)$.


At the end of the paper, we present a simple example of a rational function $f_n$
whose ordinary circuit complexity is linear in~$n$ whereas its subtraction-free complexity,
while finite, grows at least exponentially in~$n$.


The paper is organized as follows.
Section~\ref{sec:complexity} reviews basic prerequisites in
algebraic complexity, along with some relevant historical background.
In Section~\ref{sec:main-results} we present our main results.
Their proofs 
occupy the rest of the paper.
Sections~\ref{sec:main-algorithm}--\ref{sec:skew}
are devoted to subtraction-free algorithms for computing Schur functions and their
variations, while in
Sections~\ref{sec:spanning-trees}--\ref{sec:directed-spanning-trees}
we develop such algorithms
for computing generating functions for spanning trees, either
ordinary or directed.
In Section~\ref{sec:quadratic}, we demonstrate the existence of exponential gaps
between ordinary and subtraction-free complexity.

\pagebreak[3]

\section{Computational complexity}
\label{sec:complexity}

We start by reviewing the relevant basic notions of computational complexity,
more specifically complexity of arithmetic circuits (with
restrictions).
See \cite{BCS, G, SY} for in-depth-treatment and further
references.

An \emph{arithmetic circuit} is an oriented network each of whose
nodes (called \emph{gates}) performs a single arithmetic operation:
addition, subtraction, multiplication, or division. The circuit inputs
a collection of \emph{variables} (or indeterminates) as well as some
scalars, and outputs a rational function in those variables.
The \emph{arithmetic circuit complexity} of a rational function is the
smallest size of an arithmetic circuit that computes this function.

The following disclaimers further clarify the setup considered in this
paper:
\begin{itemize}
\item[-]
we define complexity as the number of gates in a circuit rather than
its depth;
\item[-]
we do not concern ourselves with parallel computations;
\item[-]
we allow arbitrary positive integer scalars as inputs.
\end{itemize}
Although we focus on arithmetic circuit complexity,
we also provide 
\emph{bit complexity} estimates for our algorithms.
For the latter purpose, the input variables should be viewed as numbers rather than
formal variables.

As is customary in complexity theory, we consider families of
computational problems indexed by a positive integer parameter~$n$,
and only care about the rough
asymptotics of the arithmetic complexity as a function of~$n$. The
number of variables may depend on~$n$.

Of central importance is the dichotomy between polynomial and
superpolynomial (in particular exponential) complexity classes. We use
the shorthand $\poly(n)$ to denote the dependence of complexity on $n$
that can be bounded from above by a polynomial in~$n$.

Perhaps the most important (if simple) example of a sequence
of functions whose arithmetic circuit complexity is $\poly(n)$ is the
determinant of an $n$ by $n$ matrix. (The entries of a matrix are
treated as indeterminates.) The simplest---though not the
fastest---polynomial algorithm for computing the determinant is
Gaussian elimination.


In this paper, we are motivated by the following fundamental question:
How does the complexity of an algebraic expression depend on the set
of operations allowed?

Let us formulate the question more precisely.
Let $M$ be a subset of the set $\{+,-,*,/\}$ of arithmetic operations.
Let $Z\{M\}=Z\{M\}(x,y,\dots)$ denote the class of rational functions
in the variables $x,y,\dots$ which can be defined using only
operations in~$M$.
For example, the class $Z\{+,*,/\}$ consists of \emph{subtraction-free
  expressions}, i.e., those rational functions which can be written
without using subtraction (note that negative scalars are not allowed
as inputs).
To illustrate, $x^2-xy+y^2\in Z\{+,*,/\}(x,y)$ because
$x^2-xy+y^2=(x^3+y^3)/(x+y)$.

While the class $Z\{M\}$ can be defined for each of the $2^4=16$
subsets $M\subset\{+,-,*,/\}$,
there are only 9 distinct classes among these~16.
This is because addition can be emulated by subtraction: $x+y = x-((x-y)-x)$.
Similarly, multiplication can be emulated by division.
This leaves 3 essentially distinct possibilities for the additive
(resp., multiplicative) operations.
The corresponding 9 computational models are shown in
Table~\ref{table:9-models}.

\pagebreak[3]

\begin{table}[ht]
\begin{center}
\begin{tabular}{|c||c|c|c|}
\hline
& no multiplicative & multiplication & multiplication\\
& operations & only & and division \\
\hline
\hline
\begin{tabular}{c}no additive \\ operations\end{tabular}
& scalars & monomials & Laurent monomials\\
\hline
\begin{tabular}{c}addition only\end{tabular}
& \begin{tabular}{c}nonnegative linear\\ combinations\end{tabular}
& \begin{tabular}{c}nonnegative\\ polynomials\end{tabular}
& \begin{tabular}{c}subtraction-free\\ expressions\end{tabular}
\\
\hline
\begin{tabular}{c}addition and\\ subtraction\end{tabular}
& \begin{tabular}{c}linear\\ combinations\end{tabular}
& polynomials
& rational functions\\
\hline
\end{tabular}
\vspace{.1in}
\end{center}
\caption{Rational functions computable with restricted set of operations}
\label{table:9-models}
\end{table}


For each subset of arithmetic operations $M\subset\{+,-,*,/\}$, there
is the corresponding notion of (arithmetic circuit)
$M$-\emph{complexity} (of an element of $Z\{M\})$.
The interesting cases are those where both additive and multiplicative operations appear,
see Table~\ref{table:nontrivial-M}.

\begin{table}[ht]
\begin{center}
ordinary complexity\\[.05in]
\setlength{\unitlength}{1pt}
\begin{picture}(140,100)(10,0)
\thicklines

\put(50,0){\line(1,0){40}}
\put(50,0){\line(0,1){20}}
\put(90,20){\line(-1,0){40}}
\put(90,20){\line(0,-1){20}}
\put(70,10){\makebox(0,0){$+\ *$}}

\put(0,40){\line(1,0){40}}
\put(0,40){\line(0,1){20}}
\put(40,60){\line(-1,0){40}}
\put(40,60){\line(0,-1){20}}
\put(20,50){\makebox(0,0){$+\, -\, *$}}

\put(-35,50){\makebox(0,0){\begin{tabular}{c}
ring\\
complexity
\end{tabular}}}

\put(100,40){\line(1,0){40}}
\put(100,40){\line(0,1){20}}
\put(140,60){\line(-1,0){40}}
\put(140,60){\line(0,-1){20}}
\put(120,50){\makebox(0,0){$+\, *\, /$}}

\put(185,50){\makebox(0,0){\begin{tabular}{c}
subtraction-free\\
complexity
\end{tabular}}}

\put(45,80){\line(1,0){50}}
\put(45,80){\line(0,1){20}}
\put(95,100){\line(-1,0){50}}
\put(95,100){\line(0,-1){20}}
\put(70,90){\makebox(0,0){$+ - * \ /$}}

\put(55,20){\line(-1,1){20}}
\put(85,20){\line(1,1){20}}
\put(35,60){\line(1,1){20}}
\put(105,60){\line(-1,1){20}}

\end{picture}\\
semiring complexity
\vspace{.1in}
\end{center}
\caption{Notions of $M$-complexity, with $M\supset\{+,*\}$.}
\label{table:nontrivial-M}
\end{table}

Now, how does $M$-complexity depend on~$M$, when there is a choice?
Here is one way to make this question precise:

\begin{problem}
\label{problem:M-m}
Let $f_1,f_2,\dots$ be a sequence of rational functions (depending on
a potentially changing set of variables) which can be computed using
the gates in $M'\subsetneq M\subset\{+,-,*,/\}$.
If the $M$-complexity of $f_n$ is $\poly(n)$, does it follow that its
$M'$-complexity is also $\poly(n)$?
\end{problem}


The nontrivial  instances of Problem~\ref{problem:M-m}, discussed
in Examples~\ref{example:strassen}--\ref{example:strassen-sf} below,
concern the four notions of $M$-complexity that involve
both additive and multiplicative operations.

\begin{example}
\label{example:strassen}
$M=\{+,-,*,/\}$, $M'=\{+,-,*\}$. In 1973, V.~Strassen \cite{Strassen}
(cf.\ \cite[Theorem 2.11]{SY})
proved that in this case,
the answer to Problem~\ref{problem:M-m} is essentially \emph{yes}: division gates
can be eliminated (at polynomial cost) provided the total degree of the polynomial $f_n$ is $\poly(n)$.

As a consequence, one for example obtains a division-free polynomial
algorithm for computing a determinant.
More efficient algorithms of this kind can be constructed directly
(ditto for the Pfaffian), see 
\cite{Rote} and references therein.
\end{example}

\begin{example}
\label{example:valiant}
$M=\{+,-,*\}$, $M'=\{+,*\}$.
(In view of Strassen's theorem, this setting is essentially equivalent to
taking $M=\{+,-,*,/\}$, $M'=\{+,*\}$.)
In 1980, L.~Valiant~\cite{Valiant} has
shown that in this case, the answer to Problem~\ref{problem:M-m} is
\emph{no}:
for a certain sequence of polynomials $f_n$ with nonnegative integer
coefficients,
the $\{+,*\}$-complexity of $f_n$ is exponential in $n$ whereas their
$\{+,-,*\}$-complexity (equivalently, ordinary arithmetic circuit
complexity) is $\poly(n)$.
The polynomial $f_n$ used by Valiant is defined as a generating
function for perfect matchings in a particular planar graph (a
triangular grid).
By a classical result of P.~W.~Kasteleyn~\cite{Kasteleyn}, such
generating functions can be computed as certain Pfaffians, hence their
ordinary complexity is polynomial.

It is unknown whether subtraction-free complexity of Valiant's test function~$f_n$
is $\poly(n)$. If the answer is \emph{yes}, then $f_n$ exhibits a (superpolynomial) complexity gap between
subtrac\-tion-free and $\{+,*\}$-complexity. If the answer is \emph{no}, then we get a complexity gap between
ordinary and subtraction-free complexity. Thus, we have known since Valiant's work that
one of these two gaps is present in his example---but we still do not know which~one!

Other examples of polynomials $f_n$ which exhibit an exponential gap between
ordinary and $\{+,*\}$-complexity were given by M.~Jerrum and M.~Snir~\cite{Jerrum-Snir},
cf.\ Theorem~\ref{th:jerrum-snir}.

The notion of $\{+,*\}$-complexity of a polynomial with nonnegative coefficients was
already considered in 1976 by C.~Schnorr~\cite{Schnorr}.
(He used the terminology ``monotone rational computations'' which we shun.)
Schnorr gave a lower bound for $\{+,*\}$-complexity
which only depends on the \emph{support} of a polynomial,
i.e., on the set of monomials that contribute with a positive
coefficient.
Valiant's argument uses a further refinement of Schnorr's lower bound,
cf.~\cite{Shamir-Snir}.
\end{example}

\begin{example}
\label{example:sf-complexity}
$M=\{+,-,*,/\}$, $M'=\{+,*,/\}$.
In this case, Problem~\ref{problem:M-m} asks
whether any subtraction-free rational expression that can be computed
by an arithmetic circuit of polynomial size can be computed  by
such a circuit without subtraction gates.
In Section~\ref{sec:quadratic},
we show  the answer to this question to be negative,
by constructing a sequence of polynomials $f_n$ whose ordinary arithmetic circuit complexity
is $O(n)$ while their $\{+,*,/\}$-complexity is at least exponential in~$n$.
Unfortunately, this example is somewhat artificial;
it would be interesting to find an example of a natural computational problem
with an exponential gap between ordinary and subtraction-free complexity.

In the opposite direction, we demonstrate that for some important classes of functions,
the gap between these two complexity measures is merely polynomial,
in a somewhat counter-intuitive way:
these functions
turn out to have polynomial subtraction-free complexity
even though their ``naive'' sub\-trac\-tion-free description has exponential size.

Note that subtraction is the only arithmetic operation that does not
allow for an efficient control of round-up errors (for positive real
inputs).
Consequently the task of eliminating subtraction gates
is relevant to the design of numerical algorithms which are
both efficient and precise.
To rephrase, this instance of Problem~\ref{problem:M-m}
can be viewed as addressing the tradeoff between speed and accuracy.
See \cite{Demmel-Koev-MathComp} for an excellent discussion of these issues.



\end{example}

\begin{example}
\label{example:strassen-sf}
$M\!=\!\{+,*,/\}$, $M'\!=\! \{+,*\}$.
This is the subtraction-free version of the problem discussed in Example~\ref{example:strassen}.
That is: can division gates be eliminated in the absence of subtraction?
We will show that the answer is~\emph{no},
by demonstrating that the generating function for directed spanning trees
in a complete directed graph on $n$ vertices has $\poly(n)$ subtraction-free complexity.
This contrasts with an exponential lower bound for the $\{+,*\}$-complexity
of the same generating function, given by M.~Jerrum and M.~Snir~\cite{Jerrum-Snir}.
\end{example}

\section{Main results}
\label{sec:main-results}

\subsection*{Schur functions and their variations}

\emph{Schur functions} $s_\lambda(x_1,...x_k)$
(here $\lambda=(\lambda_1\ge \lambda_2\ge\cdots\ge 0)$ is an integer partition)
are remarkable symmetric polynomials that
play prominent roles in representation
theory, algebraic geometry, enumerative combinatorics, mathematical
physics, and other mathematical disciplines; see, e.g.,
\cite[Chapter~I]{Macdonald} \cite[Chapter~7]{EC2}.
Among many equivalent ways to define Schur functions
(also called \emph{Schur polynomials}), let us mention
two classical determinantal formulas: the bialternant formula and the Jacobi-Trudi formula.
These formulas are recalled in
Sections~\ref{sec:main-algorithm} and \ref{sec:skew}, respectively.

Schur functions and their numerous variations
(\emph{skew} Schur functions,
\emph{supersymmetric} Schur functions,
$Q$-~and \hbox{$P$-Schur functions}, etc., see \emph{loc.\ cit.})
provide
a natural source of computational problems whose complexity might be
sensitive to the set of allowable arithmetic operations.
On the one hand, these polynomials can be computed efficiently in an
unrestricted setting, via determinantal formulas;
on the other hand, their (nonnegative) \linebreak[3]
expansions,
as generating functions for appropriate \emph{tableaux},
are in general exponentially
long, and coefficients of individual monomials are provably hard to compute,
cf.\ Remark~\ref{rem:+*-schur}.
(Admittedly, a low-complexity polynomial can have high-complexity coefficients.
For example, the coefficient of  $x_1\cdots x_n$ in
$\prod_i \sum_j (a_{ij} x_j)$
is the permanent of the matrix $(a_{ij})$.)

The interest in determining the subtraction-free complexity of Schur functions
goes back at least as far as mid-1990s, when the problem
attracted the attention of J.~Demmel and the first author,
cf.~\cite[pp.\ 66--67]{DGESVD}.
The following result is implicit in the work of P.~Koev
\cite[Section 6]{Koev-2007}; more details can be found in \cite[Section 4]{CDEKK}).

\begin{theorem}[P.~Koev]
\label{th:schur}
Subtraction-free complexity of a Schur polynomial
$s_\lambda(x_1,...x_k)$ is at most $O(n^3)$ where $n=k+\lambda_1$.
\end{theorem}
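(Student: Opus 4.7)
\emph{Plan.} The plan is to compute $s_\lambda$ via a polynomial-length sequence of cluster mutations, each of which is manifestly subtraction-free. The starting point is the bialternant formula
\[
s_\lambda(x_1,\ldots,x_k) \;=\; \frac{\det(x_i^{\lambda_j+k-j})_{1\le i,j\le k}}{\det(x_i^{k-j})_{1\le i,j\le k}},
\]
which exhibits $s_\lambda$ as a ratio of two maximal Pl\"ucker minors of the $k \times n$ matrix $V = (x_i^{j-1})$, where $n = k+\lambda_1$. Each individual minor involves subtractions, but the quotient is a polynomial with nonnegative coefficients; the point is to produce this quotient without ever forming the signed intermediate expressions.

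The tool is the cluster algebra structure on the Grassmannian $\mathrm{Gr}(k,n)$, in which Pl\"ucker coordinates are cluster variables and the three-term Pl\"ucker identity
\[
p_{Aac}\,p_{Abd} \;=\; p_{Aab}\,p_{Acd} \,+\, p_{Aad}\,p_{Abc}
\]
serves as the exchange relation. Rewritten as $p_{Abd} = (p_{Aab}p_{Acd}+p_{Aad}p_{Abc})/p_{Aac}$, every mutation is subtraction-free and costs $O(1)$ arithmetic operations once the current cluster variables are known. My plan is to (a)~initialize with a seed whose Pl\"ucker coordinates, evaluated at $V$, are monomials or short sums in the $x_i$ computable in $O(n^2)$ gates in total --- a staircase-like seed of consecutive minors $p_{[i,i+k-1]}$ together with nearby Pl\"ucker coordinates should do the job; (b)~apply an explicit sequence of mutations, driven by the combinatorics of a wiring diagram or plabic graph supported on the $k \times \lambda_1$ rectangle, to reach a seed containing both the target Pl\"ucker coordinate (with value $\det(x_i^{\lambda_j+k-j})$) and the ``Vandermonde'' Pl\"ucker coordinate $p_{[1,k]}$; (c)~output their ratio. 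Such a diagram has $O(n^2)$ crossings, giving an $O(n^2)$-step mutation path; allowing $O(n)$ gates per mutation to handle products of monomials of total degree $O(|\lambda|)$ yields the overall bound $O(n^3)$.

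The main obstacle is step~(b): exhibiting the mutation sequence and verifying that its final seed really does contain the required Pl\"ucker coordinate, rather than some shifted or twisted relative. This is a combinatorial question about the cluster complex of $\mathrm{Gr}(k,n)$, to be addressed using the plabic-graph/wiring-diagram machinery of Scott and Postnikov; the needed fact is that every Pl\"ucker coordinate appears in some seed reachable from a standard staircase seed by $O(n^2)$ mutations. A secondary point is to confirm that intermediate cluster variables remain polynomials of degree $O(|\lambda|)$, which then also bounds the bit complexity of the whole algorithm.
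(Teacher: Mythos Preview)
Your overall strategy---bialternant formula plus three-term Pl\"ucker relations as subtraction-free mutations---is the same idea the paper uses. But there is a genuine gap in step~(a). For the unscaled matrix $V=(x_i^{j-1})$, the maximal minor on any column set $J$ equals $s_\mu(x_1,\dots,x_k)\cdot\prod_{i<j}(x_j-x_i)$, where $\mu$ is the partition with $I(\mu)=J$. In particular your proposed initial values $p_{[i,i+k-1]}(V)=(x_1\cdots x_k)^{i-1}\cdot\prod_{i<j}(x_j-x_i)$ are \emph{not} monomials or short sums: each carries the Vandermonde factor, which is not a subtraction-free expression at all (it takes negative values on positive inputs). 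So you cannot initialize the circuit as described, and postponing the division by $p_{[1,k]}$ to step~(c) does not help---you never get the individual Pl\"ucker coordinates subtraction-free in the first place.

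The fix is to divide out the Vandermonde from the start: work with the normalized quantities $\tilde p_J=p_J(V)/p_{[1,k]}(V)=s_\mu$, which satisfy the same homogeneous three-term relations. This is exactly what the paper does, packaged as taking flag minors of the \emph{rescaled} Vandermonde matrix $X_{ij}=x_j^{i-1}/\prod_{a<j}(x_j-x_a)$; the interval flag minors are then literally the monomials $(x_1\cdots x_k)^{\ell-1}$, and step~(c) disappears because the target value already \emph{is}~$s_\lambda$. The paper carries this out in the flag-variety (pseudoline-arrangement) setting rather than the Grassmannian, so intermediate chamber minors are Schur polynomials in $1,2,\dots,k$ variables; it then exhibits an explicit sequence of at most $\binom{n}{3}$ local moves (each a single application of $ef=ac+bd$, hence $O(1)$ operations, not $O(n)$) from the special arrangement~$A^\circ$ to one containing~$s_{I(\lambda)}$. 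Your Grassmannian variant should also work after the rescaling, but you would still owe the verification that some initial seed of $\mathrm{Gr}(k,n)$ specializes, after dividing by the Vandermonde, to quantities you can write down subtraction-free.
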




In this paper, we give an alternative proof of Theorem~\ref{th:schur}
based on the technology of \emph{cluster transformations}.
The algorithm presented in Section~\ref{sec:main-algorithm} computes $s_\lambda(x_1,...x_k)$
via a subtraction-free arithmetic circuit of size $O(n^3)$.
The bit complexity is $O(n^3 \log^2 n)$.

\medskip

All known fast subtraction-free algorithms for computing Schur
functions use division.

\begin{problem}
\label{problem:+*-schur}
Is the $\{+,*\}$-complexity of a Schur function polynomial?
\end{problem}

\begin{remark}
\label{rem:+*-schur}
We suspect the answer to this question to be negative.
In any case, Problem~\ref{problem:+*-schur} is likely to be very
hard. We note that Schnorr-type lower bounds are useless in the case
of Schur functions.
Intuitively, computing a Schur function is difficult not because of
its support  but because of the complexity of its coefficients (the
Kostka numbers).
The problem of computing an individual Kostka number is known to be
\#\textbf{P}-complete (H.~Narayanan~\cite{Narayanan}) whereas the support of a Schur
function is very easy to determine.
\end{remark}

Our approach leads to the following generalizations of Theorem~\ref{th:schur}.
See Sections~\ref{sec:supersymmetric} and~\ref{sec:skew} for precise definitions as well as proofs.

\begin{theorem}
\label{th:double}
A double Schur polynomial
$s_\lambda(x_1,\dots,x_k\,|\, y)$
can be computed by a sub\-traction-free arithmetic circuit of
size $O(n^3)$ where $n=k+\lambda_1$.
The bit complexity of the corresponding algorithm is $O(n^3\log^2 n)$.
\end{theorem}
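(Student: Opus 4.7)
The plan is to parallel the algorithm of Section~\ref{sec:main-algorithm} for the ordinary Schur polynomial, adapting only the input weights fed into the same cluster-transformation machinery. I first recall that the double Schur polynomial admits a combinatorial (tableau) expansion
$$s_\lambda(x_1,\dots,x_k\mid y)=\sum_T\prod_{(i,j)\in\lambda}\bigl(x_{T(i,j)}+y_{T(i,j)+j-i}\bigr),$$
where the sum runs over semistandard Young tableaux $T$ of shape $\lambda$. This displays $s_\lambda(x\mid y)$ as a polynomial in the $x_i,y_j$ with positive coefficients, so it does belong to $Z\{+,*,/\}$; the task is to compute it efficiently.

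The next step is to encode this generating function as a matrix minor of a ``doubled'' Toeplitz-like matrix, in analogy with the Jacobi--Trudi presentation $s_\lambda=\det(h_{\lambda_i-i+j})$; the relevant doubled complete symmetric functions $h_k(x\mid y)$ are themselves subtraction-free polynomials in $x$ and $y$, obtained by Lindstr\"om--Gessel--Viennot from a planar acyclic network whose edge weights are monomials in the $x_i$ and simple binomials of the form $x_i+y_j$. Crucially, the combinatorial structure of the network is identical to the undoubled case; only the numerical labels of certain edges change.

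With this in hand, one applies verbatim the sequence of cluster transformations from Section~\ref{sec:main-algorithm}. Cluster mutations are subtraction-free rational maps that treat their arguments as formal positive quantities, so feeding the doubled initial seed through the same sequence of $O(n^3)$ mutations produces $s_\lambda(x\mid y)$ in place of $s_\lambda(x)$. The arithmetic gate count is unchanged. For the bit-complexity estimate, the same size bounds on the positive rationals encountered at intermediate clusters apply, with each rational's numerator and denominator of bit-length $\widetilde O(n)$, giving a total of $O(n^3\log^2 n)$ bit operations.

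The main obstacle I foresee is the bookkeeping required to certify that the doubled initial seed really mutates to $s_\lambda(x\mid y)$, i.e.\ that the cluster variable attached to the final seed agrees with the desired double Schur polynomial rather than with some a priori different rational function on the same cluster. Concretely, one must confirm the Pl\"ucker-type three-term identities between the $h_k(x\mid y)$ and the flag minors of the $y$-shifted network, generalizing those already verified at $y=0$. Once this compatibility check is in place, the complexity estimates transfer directly from the proof of Theorem~\ref{th:schur}.
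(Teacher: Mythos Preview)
Your plan diverges from the paper's in a way that runs into the very obstacle the paper warns about. You propose to realise $s_\lambda(x\mid y)$ as a minor of a doubled Jacobi--Trudi (Toeplitz) matrix built from the $h_k(x\mid y)$ and then run the cluster transformations of Section~\ref{sec:main-algorithm} on that matrix. But Section~\ref{sec:main-algorithm} does \emph{not} use Jacobi--Trudi; it explicitly rejects it, because the initial cluster---the chamber minors of the special arrangement~$A^\circ$, i.e.\ the flag minors indexed by intervals---would then consist of rectangular (double) Schur polynomials, which are no easier to compute subtraction-free than the target. The algorithm in Section~\ref{sec:main-algorithm} instead uses the bialternant matrix~$X$ of~\eqref{eq:Xij}, whose interval flag minors are monomials. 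The paper's proof of Theorem~\ref{th:double} consists of the observation that for the $y$-deformed matrix~$Z$ of~\eqref{eq:Zij}, the interval flag minors are again explicit subtraction-free products, namely $\prod_{j\le k}\prod_{b<\ell}(x_j+y_b)$ (equation~\eqref{eq:special-minor-double}). Once that is in hand, the identical sequence of local moves applies verbatim, since the three-term relation $ef=ac+bd$ of~\eqref{eq:ef=ac+bd} is a Pl\"ucker identity valid for the flag minors of \emph{any} matrix.

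That last point also means the obstacle you flag at the end is not one: the three-term identities require no separate verification in the doubled setting, as they are universal determinantal identities among flag minors. What is genuinely missing from your proposal is the identification of an initial seed whose entries are manifestly subtraction-free and computable in $O(n^2)$ arithmetic operations; without that, the recursion has nowhere to start, and your Jacobi--Trudi choice does not supply one.
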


Theorem~\ref{th:double} can be used
to obtain an efficient subtraction-free algorithm for \emph{super\-symmetric} Schur functions,
see Theorem~\ref{eq:th:super}.

\begin{theorem}
\label{th:skew}
A skew Schur polynomial
$s_{\lambda/\nu}(x_1,\dots,x_k)$
can be computed by a subtraction-free arithmetic circuit of
size $O(n^5)$ where $n=k+\lambda_1$.
The bit complexity of the corresponding algorithm is $O(n^5\log^2 n)$.
\end{theorem}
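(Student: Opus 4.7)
My plan is to extend the cluster-transformation algorithm behind Theorem~\ref{th:schur} from ordinary to skew shapes. The starting point is the Jacobi--Trudi formula
\[
s_{\lambda/\nu}(x_1,\ldots,x_k)=\det\bigl(h_{\lambda_i-\nu_j-i+j}(x_1,\ldots,x_k)\bigr)_{1\le i,j\le \ell(\lambda)},
\]
which exhibits $s_{\lambda/\nu}$ as an $\ell\times\ell$ minor, with $\ell\le n$, of the semi-infinite Toeplitz matrix $H=(h_{j-i})_{i,j\ge 0}$: rows are encoded by $\nu$, columns by $\lambda$. Setting $\nu=\emptyset$ recovers the ordinary Schur case already handled by Theorem~\ref{th:schur}.

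The key structural input is the classical total positivity of $H$: its generating column $\sum_m h_m z^m=\prod_j(1-x_jz)^{-1}$ is a P\'olya frequency function, so every minor of $H$ is a positive polynomial in the $x_i$. Consequently the short Pl\"ucker / Desnanot--Jacobi identities among minors of $H$ take the cluster-exchange form $M\,M'=\text{(positive monomial)}+\text{(positive monomial)}$ in five adjacent minors, and inverting such an identity to solve for one factor is a single subtraction-free operation. A suitable family of Toeplitz minors therefore forms a cluster algebra in which every mutation is subtraction-free.

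With this in place, I would proceed in three layers. First, precompute $h_0,\ldots,h_n$ subtraction-freely in $O(n^2)$ operations using the recurrence from $\prod(1-x_jz)^{-1}$. Next, invoke the algorithm of Theorem~\ref{th:schur}, which I would recast as an $O(n^3)$-long sequence of cluster exchanges in the Toeplitz cluster algebra producing any consecutive-row minor---in particular $s_\lambda$---subtraction-freely. Finally, to replace the consecutive row selection $(0,1,\ldots,\ell-1)$ by the $\nu$-indexed selection $(\nu_j-j+\ell)_j$, insert an outer loop of $O(n^2)$ row-sliding mutations, each of which shifts one row index by one unit and invokes the Theorem~\ref{th:schur} subroutine as an inner step. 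The arithmetic count multiplies to $O(n^2)\cdot O(n^3)=O(n^5)$. The bit-complexity bound $O(n^5\log^2 n)$ follows from the Laurent phenomenon: every intermediate cluster variable is a subtraction-free Laurent polynomial in the initial $h_i$'s of total degree $O(n)$, so intermediate values have bit length $O(n\log n)$ and arithmetic on them adds only the expected logarithmic factor.

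The principal obstacle is the outer row-sliding layer. One must exhibit, for each pair of adjacent row selections differing by a single index shift, a Pl\"ucker-type relation in $H$ whose subtraction-free resolution expresses the new minor in terms of minors already computed. This reduces to a combinatorial matching between the staircase geometry of the skew shape $\lambda/\nu$ and the exchange graph of the Toeplitz cluster algebra. Once this matching is produced and the positivity of intermediate minors is confirmed (which is automatic from total positivity of $H$), both the $O(n^5)$ arithmetic complexity and the $O(n^5\log^2 n)$ bit complexity follow.
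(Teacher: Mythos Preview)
Your outline has the right ingredients---Jacobi--Trudi plus cluster exchanges among flag minors---but it contains a genuine gap that the paper confronts head-on and you sweep under the rug. You assert that the Toeplitz matrix $H=(h_{j-i})$ is totally positive, so ``every minor of $H$ is a positive polynomial in the~$x_i$'' and hence every exchange $f=(ac+bd)/e$ is a well-defined subtraction-free step. This is false: $H$ is only totally \emph{nonnegative}, and many of its minors vanish identically (indeed $s_{\lambda/\nu}=0$ whenever $\nu\not\subset\lambda$, and such shapes inevitably arise as intermediate flag minors). So the ``automatic'' positivity you invoke does not hold, and some of your divisions are by zero. The paper deals with this explicitly: it fixes~$\nu$, works with the $n\times k$ matrix $H_\nu$ whose flag minors are the $s_{\lambda/\nu(p)}$, and replaces the cluster step by the rule ``if $e=0$ then set $f=0$, else $f=(ac+bd)/e$.'' The nontrivial point is that this rule is \emph{correct}, i.e.\ that $e=0\Rightarrow f=0$ in this setting; the paper verifies this from the combinatorial vanishing criterion for skew Schur functions (if $s_{J\cup\{q\}}=0$ with $p<q<r$, then $s_{J\cup\{p\}}=0$, hence $s_{J\cup\{p,r\}}=0$). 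Your proposal needs exactly this argument, or a substitute for it.

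There is also a structural mismatch. The paper does not run an outer $O(n^2)$ ``row-sliding'' loop around an inner Theorem~\ref{th:schur} subroutine. Instead, it runs the \emph{same} pseudoline-arrangement algorithm as before, but on the matrix~$H_\nu$ rather than on the rescaled Vandermonde matrix; the $O(n^3)$ cluster moves are unchanged. The $O(n^5)$ cost arises entirely in the \emph{initialization}: the interval flag minors of $H_\nu$ are skew Schur functions of rectangular outer shape, hence ordinary Schur functions, and there are $O(n^2)$ of them, each computed in $O(n^3)$ via Theorem~\ref{th:schur}. Your decomposition $O(n^2)\cdot O(n^3)$ gives the right answer for the wrong reason, and your ``row-sliding mutation'' layer is never actually specified. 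Finally, the bit-complexity bound in the paper tracks the control structure (generating and sorting the $O(n^3)$ flip triples, each of bit size $O(\log n)$), not a Laurent-phenomenon bound on the bit length of intermediate \emph{values}; your justification here is off-target.
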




\begin{remark}
The actual subtraction-free complexity
(or even the $\{+,*\}$-complexity) of a particular Schur polynomial
can be significantly smaller than the upper bound
of Theorem~\ref{th:schur}.
For example, consider the bivariate Schur polynomial
$s_{(\lambda_1,\lambda_2)}(x_1,x_2)$ given by
\[
s_{(\lambda_1,\lambda_2)}(x_1,x_2)=(x_1
x_2)^{\lambda_2}h_{\lambda_1-\lambda_2}(x_1,x_2),
\]
where $h_d(x_1,x_2)=\textstyle\sum_{1\le i\le d} x_1^i\cdot
x_2^{d-i}$ (the complete homogeneous symmetric polynomial).
The polynomial $s_{(\lambda_1,\lambda_2)}(x_1,x_2)$
can be computed in $O(\log(\lambda_1))$ time using addition and multiplication only,
by iterating the formulas
\begin{align}
h_{2d+1}(x_1,x_2)&=(x_1^{d+1}+x_2^{d+1})\,h_d(x_1,x_2)\\
h_{2d+2}(x_1,x_2)&=(x_1^{d+2}+x_2^{d+2})\,h_d(x_1,x_2) + x_1^{d+1} x_2^{d+1}.
\end{align}
\end{remark}


\pagebreak[3]

\subsection*{Spanning trees}

We also develop efficient subtraction-free algorithms for another class
of polynomials:
the generating functions of spanning trees in weighted graphs,
either ordinary (undirected) or directed.
In the directed case, the edges of a tree should be directed towards the designated root vertex.
The weight of a tree is defined as the product of the weights of its edges.
See Sections~\ref{sec:spanning-trees}--\ref{sec:directed-spanning-trees}
for precise definitions.

Determinantal formulas for these generating functions (the Matrix-Tree Theorems)
go back to G.~Kirchhoff~\cite{Kirchhoff}
(1847, undirected case) and
W.~Tutte \cite[Theorem~6.27]{Tutte-GT} (1948, directed case).
Consequently, their ordinary complexity is polynomial.
Amazingly, the $\{+,*\}$-complexity is exponential in the directed case:

\begin{theorem}[{\rm M.~Jerrum and M.~Snir~\cite[4.5]{Jerrum-Snir}}]
\label{th:jerrum-snir}
Let $\varphi_n$ denote the generating function for directed spanning trees
in a complete directed graph on $n$ vertices.
Then the $\{+,*\}$-complexity of $\varphi_n$ can be bounded from below
by $n^{-1}(4/3)^{n-1}$.
\end{theorem}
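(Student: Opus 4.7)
The plan is to adapt the classical Schnorr--Jerrum--Snir technique for monotone arithmetic circuits, combining a balanced decomposition of the circuit with combinatorial counts of sub-forests of arborescences. First, I would work with the parse-tree formalism: in a $\{+,*\}$-circuit $C$ of size $s$ computing $\varphi_n$, each monomial of $\varphi_n$ (corresponding to an arborescence $T$, viewed as a product of $n-1$ edge variables) is produced by at least one parse tree, obtained by selecting one child at every $+$-gate and both children at every $*$-gate. Building on this, the next step is a balanced-decomposition lemma: every parse tree with $n-1$ leaves contains an edge whose removal splits the leaves into two groups of sizes in $[(n-1)/3,\,2(n-1)/3]$. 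Aggregating parse trees by the circuit gate at which they are cut yields an identity
\[
\varphi_n \;=\; \sum_{i=1}^{s} P_i\cdot Q_i,
\]
where each $P_i$ and $Q_i$ is a polynomial with nonnegative coefficients whose monomials are sub-forest products of edges of degree in $[(n-1)/3,\,2(n-1)/3]$.

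Second, I would bound the number of arborescences that a single product $P_i \cdot Q_i$ can contribute to $\varphi_n$. Every such arborescence $T$ factors as a disjoint union $T = F' \sqcup F''$ with $F'\in\mathrm{supp}(P_i)$ and $F''\in\mathrm{supp}(Q_i)$ edge-disjoint sub-forests of $T$. Using the directed analogue of Cayley's formula---that the number of arborescences of the complete digraph extending a spanning forest with component-sizes $n_1,\dots,n_c$ is proportional to $n^{c-2}\,n_1\cdots n_c$---one can bound, for each fixed degree split $k + (n-1-k) = n-1$, the number of arborescences covered by a pair of supports in terms of $|\mathrm{supp}(P_i)|\cdot|\mathrm{supp}(Q_i)|$ weighted by these extension counts. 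A geometric optimization over the admissible range of $k$ then yields the uniform estimate that a single product $P_i \cdot Q_i$ contributes at most $n^{n-1}\,(3/4)^{n-1}$ arborescences, up to a polynomial factor in~$n$.

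Finally, summing over all $s$ products and comparing with the total number of arborescences of the complete digraph on $n$ vertices, namely $n^{n-1}$ (by Cayley's formula applied to each choice of root), one obtains
\[
s\cdot n^{n-1}\,(3/4)^{n-1} \;\ge\; n^{n-1},
\]
and hence $s\ge n^{-1}(4/3)^{n-1}$, the factor $n^{-1}$ absorbing the polynomial correction from the width of the admissible balance interval. The main obstacle in this plan is extracting the sharp constant $4/3$ in the exponent: this rests on the Cayley-type formula for arborescence extensions together with a careful optimization of the balance parameter~$k$ against the number of sub-forests of a given size, which together constitute the combinatorial heart of the argument~\cite{Jerrum-Snir}.
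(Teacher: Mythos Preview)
The paper does not contain a proof of this theorem. Theorem~\ref{th:jerrum-snir} is stated with attribution to Jerrum and Snir~\cite[4.5]{Jerrum-Snir} and is used as a known black-box result to contrast with the paper's own subtraction-free upper bound (Theorem~\ref{th:combined-spanning-trees}); no argument for it appears anywhere in the text. There is therefore nothing in the paper to compare your proposal against.

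That said, your outline is a recognizable sketch of the Jerrum--Snir method: parse-tree decomposition of a monotone circuit, a balanced cut into factors of comparable degree, and a combinatorial upper bound on the number of arborescences a single product $P_iQ_i$ can cover, leading to a lower bound on the number of summands. Two caveats if you intend to turn this into a self-contained proof. First, be careful with the count: for a \emph{fixed} root (which is how $\varphi_G$ is defined in Section~\ref{sec:directed-spanning-trees}), the number of arborescences of the complete digraph on $n$ vertices is $n^{n-2}$, not $n^{n-1}$; this does not affect the exponential base but does affect the polynomial factor you are trying to match. Second, the step you flag as ``the main obstacle'' is genuinely the entire content of the argument: one needs a precise extension-counting lemma (how many arborescences contain a given sub-forest with prescribed component sizes) and a tight optimization over the balance parameter to extract the base~$4/3$. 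Your sketch asserts the right shape of that lemma but does not supply it; the actual details are in~\cite{Jerrum-Snir}.
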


In Sections~\ref{sec:spanning-trees}--\ref{sec:directed-spanning-trees},
we establish the following results.

\begin{theorem}
\label{th:combined-spanning-trees}
Let $G$ be a weighted simple graph (respectively, simple directed graph)
on $n$ vertices.
Then the generating function for spanning trees in~$G$
(respectively, directed spanning trees rooted at a given vertex)
can be computed by a subtraction-free
arithmetic circuit of size~$O(n^3)$.
\end{theorem}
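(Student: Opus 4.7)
The plan is to implement a recursive vertex-elimination procedure based on (directed) star-mesh transformations, which preserve the spanning-tree generating function up to a subtraction-free multiplicative factor. In the undirected case: given $G$, pick any non-root vertex $v$ with neighbors $u_1,\dots,u_d$ joined by edge weights $w_1,\dots,w_d$, and form $G'$ by deleting $v$ and adjoining, for each pair $i<j$, an edge $u_iu_j$ of weight $w_iw_j/(w_1+\cdots+w_d)$ in parallel with any existing edge (parallel edges are then merged by adding their weights). A Schur-complement computation inside Kirchhoff's Matrix-Tree theorem gives
\[
T(G)=(w_1+\cdots+w_d)\cdot T(G').
\]
Iterating this $n-1$ times reduces $G$ to a single vertex and expresses $T(G)$ as a product of the weight-sums collected along the way---a purely $\{+,*,/\}$ expression.

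For the directed version I would use the analogous rule, with a fixed root~$r$: to eliminate a non-root vertex $v$ whose incoming edges have weights $w_i^{\text{in}}$ (from neighbors $u_i$) and whose outgoing edges have weights $w_j^{\text{out}}$ (to neighbors $u_j'$), introduce, for each pair $(u_i,u_j')$ with $u_i\neq u_j'$, a new directed edge $u_i\to u_j'$ of weight $w_i^{\text{in}}\,w_j^{\text{out}}/\bigl(\sum_k w_k^{\text{out}}\bigr)$ in parallel with the existing edge, and multiply the running product by $\sum_k w_k^{\text{out}}$. The identity $T(G)=\bigl(\sum_k w_k^{\text{out}}\bigr)\cdot T(G')$ follows from Tutte's directed Matrix-Tree theorem by taking the Schur complement at the pivot $v$ inside the reduced Laplacian (with row and column indexed by $r$ removed); the Schur complement has nonpositive off-diagonal entries and positive diagonal entries, which is precisely the Laplacian of the star-mesh-transformed graph $G'$, i.e.\ no subtraction ever enters the edge-weight updates.

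At the $k$-th elimination step the current graph has $n-k+1$ vertices, so updating edge weights on the (now smaller) complete (di)graph costs $O((n-k)^2)$ subtraction-free arithmetic operations; summing for $k=1,\dots,n-1$ yields $O(n^3)$ gates in total, which matches the advertised bound.

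The main obstacle I expect is pinning down the directed star-mesh rule: one has to fix the elimination convention carefully (outgoing versus incoming edges at $v$, and compatibility with the root~$r$), verify the multiplicative identity via the Schur complement of the directed Laplacian, and check that the resulting edge weights of $G'$ are all positive rational functions of the original weights. The undirected case is essentially classical once the Schur-complement identity is invoked; the directed case is the genuinely novel step, and everything else (the recursion, the combination of parallel edges by addition, and the $O(n^3)$ counting) is routine once that algebraic identity is in place.
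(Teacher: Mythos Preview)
Your algorithm is exactly the one the paper uses: iterated directed star-mesh transformations at non-root vertices, with the same weight rule $x_{e_{ij}}=x_i y_j/(y_1+\cdots+y_k)$ and multiplicative factor $y_1+\cdots+y_k$, followed by the observation that the undirected case reduces to the directed one (or can be handled by the identical undirected star-mesh). The $O(n^3)$ count also matches.

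The only genuine difference is in how you justify the identity $\varphi_G=(y_1+\cdots+y_k)\,\varphi_{G'}$. The paper gives a purely combinatorial proof: it groups directed spanning trees of $G$ and of $G'$ by the rooted set partition they induce on the neighbors of~$v$, and then matches the two sides block-by-block using the Cayley--Pr\"ufer identity (their Lemma on the complete digraph with weights $z_b/\sum z_v$). Your Schur-complement argument via Tutte's Matrix-Tree theorem is a legitimate and arguably slicker alternative; just be sure to verify not only that the off-diagonal entries of the Schur complement are $-\bigl(w(i\to j)+x_i y_j/Y\bigr)$ but also that the diagonal entries agree with the out-degrees in~$G'$ (equivalently, that the row sums of the Schur complement equal the new weights $w'(i\to r)$). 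This is a short computation, but ``nonpositive off-diagonal and positive diagonal'' alone does not pin down a reduced Laplacian. Once that check is in place, your proof is complete and equivalent in strength to the paper's.
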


In particular, the $\{+,*,/\}$-complexity of the
polynomials $\varphi_n$ from Theorem~\ref{th:jerrum-snir} is $O(n^3)$,
in sharp contrast with the Jerrum-Snir lower bound.

\section{Subtraction-free computation of a Schur function}
\label{sec:main-algorithm}

This section presents our proof of Theorem~\ref{th:schur},
i.e., an efficient subtraction-free algorithm for computing a
Schur function.
The basic idea of our approach is rather simple,
provided the reader is already familiar with the basics of cluster
algebras.
(Otherwise, (s)he can safely skip the next paragraph,
as we shall keep our presentation self-contained.)

A~Schur function can be given by a
determinantal formula, as a minor of a certain matrix,
and consequently can be viewed as a specialization of some
cluster variable in an appropriate cluster algebra.
It~can therefore be obtained by a sequence of subtraction-free
rational transformations (the ``cluster transformations''
corresponding to exchanges of cluster variables under cluster mutations) from
a wisely chosen initial extended cluster.
An upper bound on subtraction-free complexity is then obtained by
combining the number of mutation steps with the complexity of
computing the initial~seed.

The most naive version of this approach starts with the classical Jacobi-Trudi
formula (reproduced in Section~\ref{sec:skew}) that expresses a (more generally, skew) Schur
function as a minor of the Toeplitz matrix $(h_{i-j}(x_1,...,x_k))$
where $h_d$ denotes the $d$th complete homogeneous symmetric
polynomial, i.e., the sum of all monomials of degree~$d$.
Unfortunately, this approach (or its version employing elementary symmetric polynomials)
does not seem to yield a solution:
even though the number of mutation steps can be polynomially bounded,
we were unable to identify an initial cluster all of whose elements
are easier to compute (by a polynomial subtraction-free algorithm)
than a general Schur function.

The key idea is to employ a different cluster recurrence that iteratively computes
Schur polynomials in \emph{varying} number of arguments.
This leads us to an algorithm that ultimately relies---as did Koev's original approach~\cite{Koev-2007}---on
another classical determinantal formula for a Schur function, which goes back to Cauchy and Jacobi.
This formula expresses $s_\lambda$ as a ratio of two ``alternants,'' i.e.,
Vandermonde-like determinants. Let us recall this formula in the form
that will be convenient for our purposes;
an~uninitiated reader can view it as a \emph{definition} of a Schur function.

Let $n$ be a positive integer.
Consider the $n\times n$ ``rescaled Vandermonde'' matrix
\begin{equation}
\label{eq:Xij}
X =(X_{ij})
=\left(\frac{x_j^{i-1}}{\displaystyle\prod_{a<j}(x_j-x_a)}
\right)_{\!\!\! i,j=1}^{\!\!\!n}
=\left(
\begin{matrix}
1 & \dfrac{1}{x_2-x_1} & \dfrac{1}{(x_3-x_1)(x_3-x_2)} &\cdots\\[.2in]
x_1 & \dfrac{x_2}{x_2-x_1} & \dfrac{x_3}{(x_3-x_1)(x_3-x_2)} &\cdots\\[.2in]
x_1^2 & \dfrac{x_2^2}{x_2-x_1} & \dfrac{x_3^2}{(x_3-x_1)(x_3-x_2)}
&\cdots\\ 
\vdots&\vdots&\vdots&\ddots
\end{matrix}
\right)\!.
\end{equation}
For a subset $I\subset 
\{1,\dots,n\}$, say of cardinality~$k$,
let $s_I$ denote the corresponding ``flag minor'' of~$X$,
i.e., the determinant of the square submatrix of $X$ formed by the intersections of the rows in~$I$
and the first $k$ columns:
\begin{equation}
\label{eq:s_I}
s_I=s_I(x_1,\dots,x_k)=\det(X_{ij})_{i\in I, j\le k} .
\end{equation}
(For example, $s_{1,\dots,k}=\det(X_{ij})_{i,j=1}^{k}=1$.)
It is easy to see that $s_I$ is a symmetric polynomial in the
variables $x_1,\dots,x_k$.

Now, let $\lambda=(\lambda_1,\dots,\lambda_k)$ be a partition with at most $k$ parts
satisfying $\lambda_1+k\le n$.
Define the $k$-element subset $I(\lambda)\subset\{1,\dots,n\}$ by
\begin{equation}
\label{eq:I(lambda)}
I(\lambda)=\{\lambda_k+1, \lambda_{k-1}+2,\ldots, \lambda_1+k\}.
\end{equation}
The Schur function/polynomial $s_\lambda(x_1,\dots,x_k)$ is then given by
\begin{equation}
\label{eq:s_lambda}
s_\lambda(x_1,\dots,x_k)
=s_{I(\lambda)}(x_1,\dots,x_k)
=\det(X_{ij})_{i\in I(\lambda), j\le k}.
\end{equation}
If $\lambda$ has more than $k$ parts, then $s_\lambda(x_1,\dots,x_k)\!=\!0$.

We note that as $I$ ranges over all subsets of $\{1,\dots,n\}$,
the flag minors of $X$ range over the nonzero Schur polynomials
$s_\lambda(x_1,\dots,x_k)$ with $\lambda_1+k\le n$.

\medskip

Flag minors play a key role in one of the most
important examples of cluster algebras, the coordinate ring of
the base affine space. 
Let us briefly recall (borrowing heavily from~\cite{icm}, and glossing
over technical details, which can be found in \emph{loc.\ cit.})
the basic features of the underlying
combinatorial setup, which was first introduced in~\cite{BFZ};
cf.\ also~\cite{DKKumn}.

A \emph{pseudoline arrangement} is a collection of $n$ curves
(``pseudolines'')
each of which is a graph of a continuous function on $[-1,1]$;
each pair of pseudolines must have exactly one crossing point in
common;
no three pseudolines may intersect at a point.
See Figure~\ref{fig:4-pseudo}.
The pseudolines are numbered $\mathbf{1}$~through~$\mathbf{n}$ from
the bottom up along the left border.
The resulting  pseudoline arrangement is considered up to isotopy
(performed within the space of such arrangements).

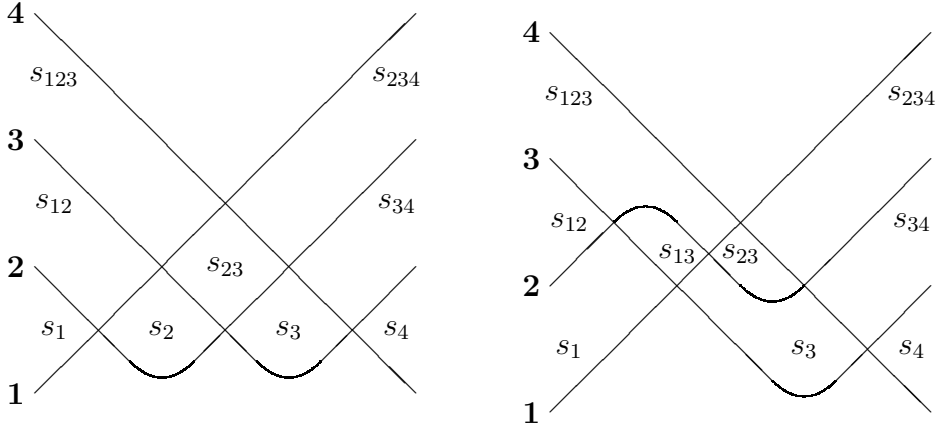
\begin{figure}[ht]
\begin{center}
\setlength{\unitlength}{2.4pt}
\begin{picture}(60,60)(0,0)
\thinlines

\put(0,0){\line(1,1){60}}
\put(0,20){\line(1,-1){15}}
\put(0,40){\line(1,-1){35}}
\put(60,40){\line(-1,-1){35}}
\put(60,20){\line(-1,-1){15}}
\put(0,60){\line(1,-1){60}}

\qbezier(35,5)(40,0)(45,5)
\qbezier(15,5)(20,0)(25,5)

\put(-3,0){\makebox(0,0){$\mathbf{1}$}}
\put(-3,20){\makebox(0,0){$\mathbf{2}$}}
\put(-3,40){\makebox(0,0){$\mathbf{3}$}}
\put(-3,60){\makebox(0,0){$\mathbf{4}$}}

\put( 3,10){\makebox(0,0){$s_{1}$}}
\put(20,10){\makebox(0,0){$s_{2}$}}
\put(40,10){\makebox(0,0){$s_{3}$}}
\put(57,10){\makebox(0,0){$s_{4}$}}

\put(3,30){\makebox(0,0){$s_{12}$}}
\put(30,20){\makebox(0,0){$s_{23}$}}
\put(57,30){\makebox(0,0){$s_{34}$}}

\put(3,50){\makebox(0,0){$s_{123}$}}
\put(57,50){\makebox(0,0){$s_{234}$}}

\end{picture}
\qquad\qquad
\begin{picture}(60,60)(0,3)
\thinlines

\put(0,0){\line(1,1){60}}
\put(0,40){\line(1,-1){35}}
\put(60,40){\line(-1,-1){20}}
\put(60,20){\line(-1,-1){15}}
\put(0,60){\line(1,-1){60}}
\put(20,30){\line(1,-1){10}}

\qbezier(30,20)(35,15)(40,20)

\qbezier(35,5)(40,0)(45,5)

\qbezier(10,30)(15,35)(20,30)
\put(0,20){\line(1,1){10}}

\put(-3,0){\makebox(0,0){$\mathbf{1}$}}
\put(-3,20){\makebox(0,0){$\mathbf{2}$}}
\put(-3,40){\makebox(0,0){$\mathbf{3}$}}
\put(-3,60){\makebox(0,0){$\mathbf{4}$}}

\put( 3,10){\makebox(0,0){$s_{1}$}}
\put(20,25){\makebox(0,0){$s_{13}$}}
\put(40,10){\makebox(0,0){$s_{3}$}}
\put(57,10){\makebox(0,0){$s_{4}$}}

\put(3,30){\makebox(0,0){$s_{12}$}}
\put(30,25){\makebox(0,0){$s_{23}$}}
\put(57,30){\makebox(0,0){$s_{34}$}}

\put(3,50){\makebox(0,0){$s_{123}$}}
\put(57,50){\makebox(0,0){$s_{234}$}}

\end{picture}

\end{center}
\caption{Two pseudoline arrangements, and associated chamber minors}
\label{fig:4-pseudo}
\end{figure}

To each region $R$ of a pseudoline arrangement,
except for the very top and the very bottom,
we associate the flag minor $s_{I(R)}$ 
indexed by the set $I(R)$ of labels
of the pseudolines passing below~$R$.
These are called \emph{chamber minors}.

Pseudoline arrangements are related to each other via sequences of
\emph{local moves} of the form shown in Figure~\ref{fig:move}.
Each local move results in replacing exactly one chamber minor
$s_{I(R)}$ by a new one;
these two minors are denoted by $e$ and $f$ in Figure~\ref{fig:move}.
To illustrate,~the two pseudoline arrangements in
Figure~\ref{fig:4-pseudo} are related by a local move that replaces
$s_{2}$ by~$s_{13}$ (or vice versa).

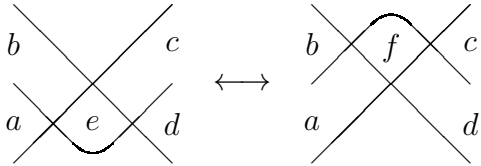
\begin{figure}[ht]
\begin{center}
\setlength{\unitlength}{1.5pt}
\begin{picture}(40,40)(0,3)
\thinlines
\put(0,0){\line(1,1){40}}
\put(0,40){\line(1,-1){40}}
\put(0,20){\line(1,-1){15}}
\put(40,40){\line(-1,-1){35}}
\put(40,20){\line(-1,-1){15}}
\qbezier(15,5)(20,0)(25,5)
\put(0,10){\makebox(0,0){$a$}}
\put(0,30){\makebox(0,0){$b$}}
\put(40,30){\makebox(0,0){$c$}}
\put(40,10){\makebox(0,0){$d$}}
\put(20,10){\makebox(0,0){$e$}}
\end{picture}
\begin{picture}(30,40)(0,3)
\put(15,20){\makebox(0,0){$\longleftrightarrow$}}
\end{picture}
\begin{picture}(40,40)(0,3)
\thinlines
\put(0,0){\line(1,1){40}}
\put(0,40){\line(1,-1){40}}
\put(0,20){\line(1,1){15}}
\put(40,40){\line(-1,-1){35}}
\put(40,20){\line(-1,1){15}}
\qbezier(15,35)(20,40)(25,35)
\put(0,10){\makebox(0,0){$a$}}
\put(0,30){\makebox(0,0){$b$}}
\put(40,30){\makebox(0,0){$c$}}
\put(40,10){\makebox(0,0){$d$}}
\put(20,29){\makebox(0,0){$f$}}
\end{picture}
\end{center}
\caption{A local move in a pseudoline arrangement}
\label{fig:move}
\end{figure}

The key observation made in~\cite{BFZ} is that
the chamber minors $a,b,c,d,e,f$ associated with the regions
surrounding the local
move (cf.\ Figure~\ref{fig:move}) satisfy the identity
\begin{equation}
\label{eq:ef=ac+bd}
ef=ac+bd.
\end{equation}
Thus $f$ can be written as a subtraction-free expression in $a,b,c,d,e$,
and similarly $e$ in terms of $a,b,c,d,f$.
It is not hard to see that any flag minor $s_I$ appears as a chamber minor
in \emph{some} pseudo\-line
arrangement (we elaborate on this point later in this section).
Consequently, by iterating the birational transformations associated with local moves,
one can get $s_I$ as a subtraction-free rational expression in the
chamber minors of any particular initial arrangement.

To complete the proof of Theorem~\ref{th:schur},
i.e., to design a subtraction-free algorithm computing a Schur
polynomial~$s_I$ in $O(n^3)$ steps,
we need to identify an initial pseudoline
arrangement (an ``initial seed'' in cluster algebras lingo)
such that
\begin{align}
\label{eq:strategy-a}
&\text{the chamber minors for the initial seed can be computed by a subtraction-free}\\[-5pt]
\notag
&\text{arithmetic circuit of size $O(n^3)$, and}\\
\label{eq:strategy-b}
&\text{for any subset $I\subset \{1,\dots,n\}$,
the initial pseudoline arrangement can be trans-}\\[-5pt]
\notag
&\text{formed into one containing $s_I$ among its chamber minors by $O(n^3)$ local moves.}
\end{align}

\begin{remark}
\label{rem:bit-complexity-arrangements}
At this point, some discussion of bit complexity is in order.
Readers not interested in this issue may skip this Remark.

Each local move ``flips'' a triangle formed by some triple of
pseudolines with labels $i<j<k$.
(To illustrate, the arrangements in Figure~\ref{fig:4-pseudo}
are related by the local move labeled by the triple $(1,2,3)$.)
A~sequence of say $N$ local moves (cf.~\eqref{eq:strategy-b}) can be encoded by the
corresponding sequence of triples
\begin{equation}
\label{eq:sequence-of-triples}
(i_1,j_1,k_1),\dots,(i_N,j_N,k_N).
\end{equation}
The bit complexity of our algorithm will be obtained by adding the
following contributions:
\begin{itemize}
\item
the bit complexity of computing the initial chamber minors;
\item
the bit complexity of generating the sequence of triples
\eqref{eq:sequence-of-triples};
\item
the bit complexity of performing the corresponding local moves.
\end{itemize}
Concerning the last item, note that in order to execute each of the $N$ local moves,
we will need to determine which arithmetic operations to perform
(there will be O(1) of them),
and how to transform the data structure that encodes the
pseudoline arrangement at hand, so as to reflect the changing
combinatorics of the arrangement.
The data structure that we suggest to use
is a graph $G$ on $\binom{n}{2}+2n$ vertices which include
the vertices $v_{ij}$ representing pairwise intersections of pseudolines,
together with the vertices $v_i^\text{left}$ and $v_i^\text{right}$ representing
their left and right endpoints.
At each vertex $v$ in $G$, we store the following information:
\begin{itemize}
\item
for each pseudoline passing through~$v$, the vertex (if any) that
immediately precedes~$v$ on that pseudoline, and also the vertex that
immediately follows~$v$;
\item
the set~$I$ labelling the chamber directly underneath~$v$; and
\item
the corresponding chamber minor~$s_I$.
\end{itemize}
With this in place, the local move labeled by a triple $(i,j,k)$
is performed by identifying the (pairwise adjacent) vertices of $G$ lying at the
intersections of the pseudolines with labels $i,j,k$,
changing the local combinatorics of the graph~$G$ in the vicinity of this triangle,
and performing the appropriate subtraction-free computation.
For each of the $N$ local moves,
the number of macroscopic operations involved is $O(1)$,
so the bit complexity of each move is polynomial in the size of the numbers involved
(which is going to be logarithmic in~$n$).
\end{remark}

We proceed with the design of an efficient subtraction-free algorithm for
computing a Schur polynomial~$s_I$, following the approach outlined in
\eqref{eq:strategy-a}--\eqref{eq:strategy-b}.

Our choice of the initial arrangement is the ``special'' pseudoline arrangement $A^\circ$
shown in Figure~\ref{fig:special-pseudo}
(cf.\ also Figure~\ref{fig:4-pseudo} on the~left).

\begin{figure}[ht]
\begin{center}
\setlength{\unitlength}{2.4pt}
\begin{picture}(60,60)(0,0)
\thinlines

\put(0,10){\line(1,-1){5}}
\put(0,20){\line(1,-1){15}}
\put(0,30){\line(1,-1){25}}
\put(0,40){\line(1,-1){35}}
\put(0,50){\line(1,-1){45}}
\put(0,60){\line(1,-1){60}}

\put(60,60){\line(-1,-1){60}}
\put(60,50){\line(-1,-1){45}}
\put(60,40){\line(-1,-1){35}}
\put(60,30){\line(-1,-1){25}}
\put(60,20){\line(-1,-1){15}}
\put(60,10){\line(-1,-1){5}}

\qbezier(5,5)(10,0)(15,5)
\qbezier(15,5)(20,0)(25,5)
\qbezier(25,5)(30,0)(35,5)
\qbezier(35,5)(40,0)(45,5)
\qbezier(45,5)(50,0)(55,5)

\put(-3,0){\makebox(0,0){$\mathbf{1}$}}
\put(-3,10){\makebox(0,0){$\mathbf{2}$}}
\put(-3,20){\makebox(0,0){$\mathbf{3}$}}
\put(-3,31){\makebox(0,0){$\mathbf{\vdots}$}}
\put(-3,41){\makebox(0,0){$\mathbf{\vdots}$}}
\put(-6.5,50){\makebox(0,0){$\mathbf{n\!-\!1}$}}
\put(-3,60){\makebox(0,0){$\mathbf{n}$}}

\end{picture}
\end{center}
\caption{The special pseudoline arrangement $A^\circ$.}
\label{fig:special-pseudo}
\end{figure}
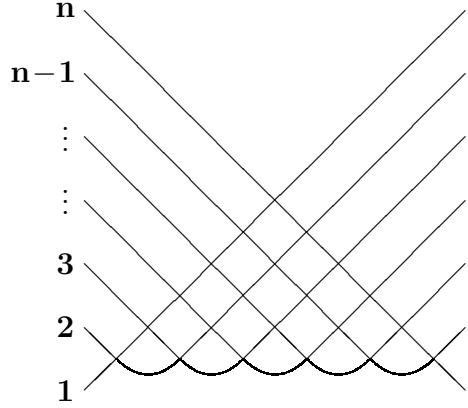

The special arrangement $A^\circ$ works well for our purposes,
for the following reason. The $\frac{n(n+1)}{2}-1$
chamber minors $s_I$ for $A^\circ$ are
indexed by the intervals
\begin{equation}
\label{eq:interval}
I
=\{\ell,\ell+1,\dots,\ell+k-1\}\subsetneq \{1,\dots,n\}.
\end{equation}
Moreover such a flag minor $s_I$ is nothing but the monomial
$(x_1\cdots x_k)^{\ell-1}$:
\begin{equation}
\label{eq:special-minor}
s_I=\det\!\left(\frac{x_j^{i-1}}{\displaystyle\prod_{a<j}(x_j-x_a)}
\right)_{\!\!\!\substack{\ell\le i\le \ell+k-1\\
1\le j\le k}}
=(x_1\cdots x_k)^{\ell-1}
\,\frac{\det(x_j^{i-1})_{i,j=1}^k}{\displaystyle\prod_{a<j\le k}(x_j-x_a)}
=(x_1\cdots x_k)^{\ell-1}.
\end{equation}
(This can also be easily seen using the combinatorial definition of a
Schur function in terms of Young tableaux.)
The collection of monomials $(x_1\cdots x_k)^{\ell-1}$ can be computed
using $O(n^2)$ multiplications,
so condition \eqref{eq:strategy-a} is satisfied.

To satisfy condition~\eqref{eq:strategy-b}, at least two alternative
strategies can be used, described below under the headings Plan~A and Plan~B.

\medskip
\pagebreak[3]

\noindent
\textbf{Plan~A: Combinatorial deformation.}
The pseudocode given below  in \eqref{eq:pseudocode-moves}
produces a sequence of $O(n^3)$ local moves
transforming the special arrangement $A^\circ$
into a particular pseudoline arrangement $A_I$ containing $s_I$ as a chamber minor:
\begin{equation}
\label{eq:pseudocode-moves}
\begin{array}{l}
\text{\textbf{for} $k:=n$ \textbf{downto} 3 \textbf{do}}\\
\quad \text{\textbf{if} $k\in I$ \textbf{then}  \textbf{for} $j:=k-1$ \textbf{downto} 2  \textbf{do}}\\
\qquad \qquad \qquad\qquad\  \text{\textbf{for} $i:=j-1$ \textbf{downto} 1  \textbf{do} $\operatorname{flip}(i,j,k)$}\\
\end{array}
\end{equation}
Figure~\ref{fig:combin-deform} illustrates the above algorithm.
Its rather straightforward justification is omitted.

\begin{figure}[ht]
\begin{center}
\setlength{\unitlength}{2.4pt}
\begin{picture}(60,70)(-5,-10)
\thinlines

\put(0,10){\line(1,-1){5}}
\put(0,20){\line(1,-1){15}}
\put(0,30){\line(1,-1){25}}
\put(0,40){\line(1,-1){35}}
\put(0,60){\line(1,-1){60}}

\put(60,60){\line(-1,-1){60}}
\put(60,50){\line(-1,-1){45}}
\put(60,40){\line(-1,-1){35}}
\put(60,30){\line(-1,-1){25}}
\put(60,20){\line(-1,-1){15}}
\put(60,10){\line(-1,-1){20}}

\put(0,0){\line(-1,0){20}}
\put(0,10){\line(-1,0){20}}
\put(0,20){\line(-1,0){20}}
\put(0,30){\line(-1,0){20}}
\put(0,40){\line(-1,0){20}}
\put(0,60){\line(-1,0){20}}

\put(-20,50){\line(1,-4){15}}
\put(-5,-10){\line(1,0){45}}

\qbezier(5,5)(10,0)(15,5)
\qbezier(15,5)(20,0)(25,5)
\qbezier(25,5)(30,0)(35,5)
\qbezier(35,5)(40,0)(45,5)

\put(-23,0){\makebox(0,0){$\mathbf{1}$}}
\put(-23,10){\makebox(0,0){$\mathbf{2}$}}
\put(-23,20){\makebox(0,0){$\mathbf{3}$}}
\put(-23,30){\makebox(0,0){$\mathbf{4}$}}
\put(-23,40){\makebox(0,0){$\mathbf{5}$}}
\put(-23,50){\makebox(0,0){$\mathbf{6}$}}
\put(-23,60){\makebox(0,0){$\mathbf{7}$}}

\put(20,-5){\makebox(0,0){$s_{6}$}}

\end{picture}
\qquad\qquad
\begin{picture}(60,70)(-20,-10)
\thinlines

\put(0,10){\line(1,-1){5}}
\put(0,20){\line(1,-1){15}}
\put(0,40){\line(1,-1){35}}
\put(0,60){\line(1,-1){60}}

\put(60,60){\line(-1,-1){60}}
\put(60,50){\line(-1,-1){45}}
\put(60,40){\line(-1,-1){35}}
\put(60,30){\line(-1,-1){35}}
\put(60,20){\line(-1,-1){15}}
\put(60,10){\line(-1,-1){20}}

\put(0,0){\line(-1,0){20}}
\put(0,10){\line(-1,0){20}}
\put(0,20){\line(-1,0){20}}
\put(-10,30){\line(-1,0){10}}
\put(0,40){\line(-1,0){20}}
\put(0,60){\line(-1,0){20}}

\put(-20,50){\line(1,-4){15}}
\put(-5,-10){\line(1,0){45}}

\put(-10,30){\line(1,-4){8.75}}
\put(-1.25,-5){\line(1,0){26.25}}

\qbezier(5,5)(10,0)(15,5)
\qbezier(15,5)(20,0)(25,5)
\qbezier(35,5)(40,0)(45,5)

\put(-23,0){\makebox(0,0){$\mathbf{1}$}}
\put(-23,10){\makebox(0,0){$\mathbf{2}$}}
\put(-23,20){\makebox(0,0){$\mathbf{3}$}}
\put(-23,30){\makebox(0,0){$\mathbf{4}$}}
\put(-23,40){\makebox(0,0){$\mathbf{5}$}}
\put(-23,50){\makebox(0,0){$\mathbf{6}$}}
\put(-23,60){\makebox(0,0){$\mathbf{7}$}}

\put(12.5,-1.5){\makebox(0,0){$s_{46}$}}

\end{picture}
\end{center}
\caption{Executing Algorithm~\eqref{eq:pseudocode-moves} for $n=7$.
Shown are the pseudoline arrangements $A_I$ for $I\!=\!\{6\}$ (on the left) and $I\!=\!\{4,6\}$ (on the right).
Algorithm~\eqref{eq:pseudocode-moves} deforms $A^\circ$
(cf.\ Figure~\ref{fig:special-pseudo}) into~$A_{\{4\}}$
and then into~$A_{\{4,6\}}$.}
\label{fig:combin-deform}
\end{figure}
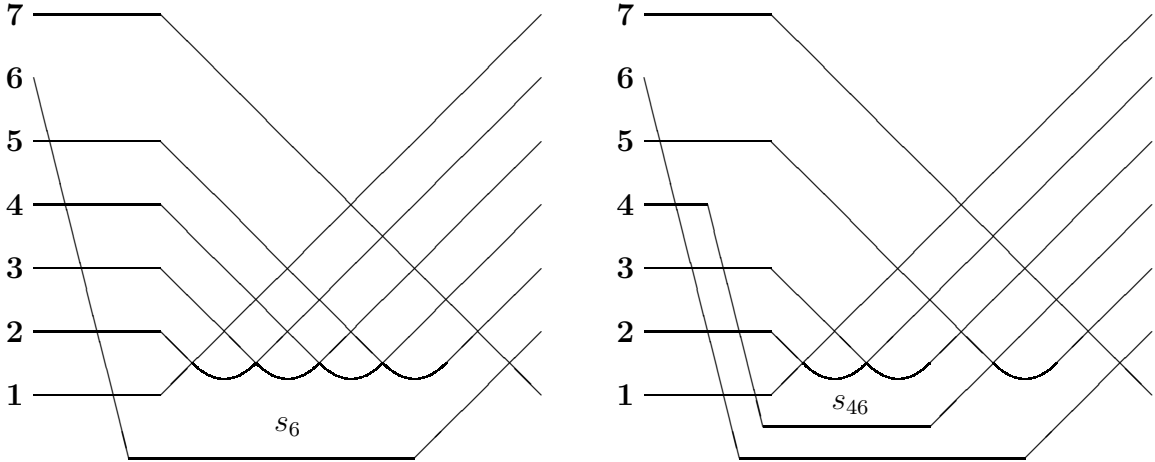

\medskip

\noindent
\textbf{Plan~B: Geometric deformation.}
Here we present an alternative solution of a more geometric flavor.
The basic idea is rather simple.
Fix a nonempty subset $I\subsetneq \{1,\dots,n\}$.
Suppose that we are able to build an arrangement $A_I$
such that
\begin{itemize}
\item
$A_I$ consists of straight line segments $L_i$; 
\item
one of the chamber minors of $A_I$ is $s_I$;
\item
$A_I$ is a ``sufficiently generic'' arrangement with these properties.
\end{itemize}
The special arrangement $A^\circ$ can be easily realized using straight segments.
We then continuously deform $A^\circ$ into~$A_I$ in the following way.
As the time parameter $t$ changes from $0$ to~$1$,
each line segment $L_i(t)$ is going to change from $L_i(0)=L_i^\circ$
to $L_i(1)=L_i$ so that each endpoint of $L_i(t)$ moves at constant speed.
It is possible to show that in the process of such deformation,
the triangle formed by each triple of lines
gets ``flipped'' at most once.
We thus obtain a sequence of at most $\binom{n}{3}$ local moves
transforming $A^\circ$ into~$A_I$, as desired.

The rest of this section is devoted to filling in the gaps left over
in the above outline.
This can be done in many different ways; the specific implementation presented below
was chosen for purely technical reasons.
We assume throughout that $n\ge 3$.

First, we realize $A^\circ$ by the collection of straight line segments
$L^\circ_1,\dots, L^\circ_n$ where $L^\circ_i$
connects the points $(-1,i^2)$ and $(1,-i)$.
Calculations show that the segments $L^\circ_i$ and $L^\circ_j$
intersect at a point $(u^\circ_{ij},v^\circ_{ij})$ with $u^\circ_{ij}=1-\frac{2}{i+j+1}$.
Consequently, for any $i<j<k$ we have $u^\circ_{ij}<u^\circ_{ik}<u^\circ_{jk}$,
implying that the arrangement's topology is as shown in
Figure~\ref{fig:special-pseudo}.

We next construct the arrangement $A_I$.
It consists of the line segments~$L_1,\dots,L_n$
such that $L_i$ has the endpoints $(1,-i)$
and $(-1,i-2\sigma _i \aa-2i^3 \aa^2)$
where
\begin{align*}
\aa&=n^{-6},\\
\sigma_i&=\begin{cases}
0 & \text{if $i\in I$;} \\
-1 & \text{if $i\notin I$.}
\end{cases}
\end{align*}
Thus $L_i$ is a segment of the straight line given by the equation
\[
y=-ix + (\sigma_i \aa +i^3\aa^2)(x-1).
\]
It is easy to see that the left (respectively right) endpoints of $L_1,\dots,L_n$
are ordered bottom-up (respectively top-down).
Consequently each pair  $(L_i,L_j)$ intersects at
a point $(x,y)$ with $-1\le x\le 1$.
Moreover one can check that all these crossing
points are distinct.
Most importantly, $L_i$ contains the point $(0,-\sigma_i \aa-i^3 \aa^2)$,
so the origin $(0,0)$ lies above $L_i$ if and only if $i\in I$;
thus the corresponding chamber minor is~$s_I$.

Let us now examine the deformation of $A^\circ$ into $A_I$ that we
described above.
As $t$ varies from $0$ to~$1$,
the right endpoint of the $i$th line segment $L_i(t)$ remains fixed at $(1,-i)$,
while the left endpoint moves at constant speed from its initial location at $(-1,i^2)$
to the corresponding location for~$A_I$.
Specifically, the left endpoint of $L_i(t)$ is
$(-1,b_i(t))$ where
\begin{equation}
\label{eq:b_i(t)}
b_i(t)=i^2-t (2\sigma_i \aa + 2 i^3\aa^2 - i+i^2).
\end{equation}
The ordering of the endpoints remains intact:
$b_1(t)<\cdots<b_n(t)$ for $0\le t\le 1$.
Thus the intervals $L_i(t)$ form a (pseudo)line arrangement
unless some three of them are concurrent.

\begin{lemma}
\label{lem:critical}
At any time instant $0\le t\le 1$, no four intervals $L_i(t)$, have a common point.
\end{lemma}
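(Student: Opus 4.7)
The plan is to translate four-line concurrency into two scalar equations in the left-endpoint heights $b_i(t)$ and then exploit the magnitude gap built into the choice $\aa=n^{-6}$.

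First I would parametrize $L_i(t)$ by the equation $2y+i(1+x)=b_i(t)(1-x)$, which comes directly from its two endpoints. A short linear-algebra computation shows that three lines $L_i(t)$, $L_j(t)$, $L_k(t)$ with $i<j<k$ are concurrent if and only if $\Delta_{ij}(t)=\Delta_{jk}(t)$, where $\Delta_{ij}(t):=(b_i(t)-b_j(t))/(i-j)$. Substituting \eqref{eq:b_i(t)} and using $i^3-j^3=(i-j)(i^2+ij+j^2)$ yields
\[
\Delta_{ij}(t)=(i+j)(1-t)+t-2t(i^2+ij+j^2)\aa^2-\frac{2t(\sigma_i-\sigma_j)\aa}{i-j}.
\]
At $t=0$ this is just $i+j$, so three-line concurrency would force $i+j=j+k$, hence $i=k$, which is absurd. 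Consequently no three lines are concurrent at $t=0$, and we may restrict attention to $t\in(0,1]$.

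Next, assume for contradiction that four indices $i_1<i_2<i_3<i_4$ give concurrent lines at some $t\in(0,1]$. I would apply the concurrency condition to the two consecutive triples $(i_1,i_2,i_3)$ and $(i_2,i_3,i_4)$. The identity $i_a^2+i_ai_b-i_bi_c-i_c^2=(i_a-i_c)(i_a+i_b+i_c)$ lets one factor out $(i_a-i_c)$ and rewrite each equation as
\begin{align*}
(1-t)-2t(i_1+i_2+i_3)\aa^2 &= \frac{2t\aa}{i_1-i_3}\,r_{123},\\
(1-t)-2t(i_2+i_3+i_4)\aa^2 &= \frac{2t\aa}{i_2-i_4}\,r_{234},
\end{align*}
where $r_{abc}:=\frac{\sigma_{i_a}-\sigma_{i_b}}{i_a-i_b}-\frac{\sigma_{i_b}-\sigma_{i_c}}{i_b-i_c}$. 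Subtracting kills the dominant $(1-t)$ term, and dividing by $2t\aa\neq 0$ yields
\[
(i_4-i_1)\,\aa \;=\; \frac{r_{234}}{i_4-i_2}-\frac{r_{123}}{i_3-i_1}.
\]

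A size comparison then closes the argument. The left-hand side is nonzero with absolute value at most $(n-1)/n^6<1/n^5$. The right-hand side is a rational number with integer numerator (each $\sigma_i\in\{0,-1\}$) and denominator dividing the product $(i_2-i_1)(i_3-i_2)(i_4-i_3)(i_3-i_1)(i_4-i_2)\le(n-1)^5<n^5$; hence if nonzero its absolute value is at least $1/n^5$. Either case contradicts the displayed equality, ruling out the assumed concurrency. The only genuinely delicate step is the algebraic repackaging that makes $(1-t)$ cancel cleanly on subtraction; thereafter the inequality $n\aa<1/n^5$, hardwired into $\aa=n^{-6}$, does all the work.
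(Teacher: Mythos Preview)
Your proof is correct and follows essentially the same route as the paper's: derive the single scalar equation (your $\Delta_{ij}=\Delta_{jk}$, the paper's \eqref{eq:equation-for-t-ijk}) that encodes three-line concurrency, subtract two instances to kill the $(1-t)$ term, divide by $2t\aa$, and finish with the magnitude gap that $\aa=n^{-6}$ provides between a nonzero multiple of~$\aa$ and a nonzero rational with denominator below~$n^5$. The only cosmetic difference is your choice of the two triples: you take the consecutive ones $(i_1,i_2,i_3)$ and $(i_2,i_3,i_4)$ sharing the middle pair, whereas the paper picks $(i,j,k)$ and $(i,j',k)$ sharing the outer pair; both choices lead to the same five-factor denominator bound and the same contradiction.
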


\begin{proof}
Let $t$ be such that distinct segments
$L_i(t)$, $L_j(t)$, and $L_k(t)$ have a common point.
Then we have the identity
\begin{equation}
\label{eq:critical-t-ijk}
(b_i(t)-b_j(t)) (i-j)^{-1}-(b_i(t)-b_k(t)) (i-k)^{-1}=0.
\end{equation}
Substituting \eqref{eq:b_i(t)} into \eqref{eq:critical-t-ijk} and dividing by $j-k$, we obtain
\begin{equation}
\label{eq:equation-for-t-ijk}
1-t+2\aa t \frac{\sigma_i(k-j)+\sigma_j(i-k)+\sigma_k(j-i)}{(i-j)(i-k)(j-k)} - 2\aa^2 t (i+j+k)=0.
\end{equation}
The (unique) time instant $t=t_{ijk}$ at which $L_i(t)$, $L_j(t)$, and $L_k(t)$
are concurrent can be found from the linear equation~\eqref{eq:equation-for-t-ijk}.
(If the solution does not satisfy $t_{ijk}\in[0,1]$, then such a time instant does not exist.)

Now suppose that $j'\notin\{i,j,k\}$ is such that $L_i(t)$, $L_{j'}(t)$, and $L_k(t)$
are concurrent at the same moment $t=t_{ijk}$.
Then \eqref{eq:equation-for-t-ijk} holds with $j$ replaced by~$j'$.
Subtracting one equation from the other and dividing by $2\aa t$, we obtain:
\begin{equation}
\label{eq:j-j'}
\frac{\sigma_i(k-j)+\sigma_j(i-k)+\sigma_k(j-i)}{(i-j)(i-k)(j-k)}
-\frac{\sigma_i(k-j')+\sigma_{j'}(i-k)+\sigma_k(j'-i)}{(i-j')(i-k)(j'-k)}
=\aa(j-j').
\end{equation}
This yields the desired contradiction. Indeed, the right-hand side of \eqref{eq:j-j'}
is nonzero, and less than~$n^{-5}$ in absolute value,
whereas the left-hand side, if nonzero, is a rational number with
denominator at most~$n^5$.
\end{proof}

In view of Lemma~\ref{lem:critical}, at each time instant~$t=t_{ijk}\in [0,1]$
satisfying equation~\eqref{eq:equation-for-t-ijk} for some triple of
distinct indices $i,j,k\in\{1,\dots,n\}$,
our pseudoline arrangement undergoes (potentially several, commuting with each other)
local moves associated with the corresponding triple intersections
of line segments $L_i(t), L_j(t), L_k(t)$.

Our algorithm computes the numbers $t_{ijk}$ via~\eqref{eq:equation-for-t-ijk},
selects those satisfying \hbox{$0\le t_{ijk}\le 1$,}
and orders them in a non-decreasing order.
This yields a sequence of $O(n^3)$ local moves
transforming $A^\circ$ into~$A_I$.
To estimate the bit complexity, we refer to
Remark~\ref{rem:bit-complexity-arrangements},
and note that the bit size of 
$t_{ijk}$ is bounded by $O(\log n)$.
The algorithm invokes a sorting algorithm \cite{Aho} to order
$O(n^3)$ numbers $t_{ijk}$, so its bit complexity is bounded
by $O(n^3\cdot \log ^2 n)$.

\begin{remark}
Our algorithm demonstrates that the positivity of the coefficients of a Schur polynomial
(as defined by the ``bialternant  formula'' \eqref{eq:s_lambda})
can be viewed as an instance of positivity of Laurent expansions of cluster variables,
a general property that conjecturally holds in any cluster algebra, see \cite[p.~499]{FZ}.
\end{remark}

\section{Double and supersymmetric Schur functions}
\label{sec:supersymmetric}

In this section, we present efficient subtraction-free algorithms for computing
double and supersymmetric Schur
polynomials. 
These polynomials play important role in representation theory and other areas
of mathematics, see, e.g., \cite{Goulden-Greene, Macdonald-92, Molev}
and references therein.
Our notational conventions are close to those in \cite[6th~Variation]{Macdonald-92};
the latter conventions differ from some other literature including~\cite{Molev}.

\subsection*{Double Schur functions}

Let $y_1,y_2,\dots$ be a sequence of formal variables.
Double Schur functions $s_\lambda(x_1,\dots,x_k|y)$
are generalizations of ordinary Schur functions
$s_\lambda(x_1,\dots,x_k)$
which depend on additional parameters~$y_i$.
The definition given below is a direct generalization of the
definition of $s_\lambda(x_1,\dots,x_k)$ given in
Section~\ref{sec:main-algorithm}.

Let $Z=(Z_{ij})_{i,j=1}^n$ be the $n\times n$ matrix defined by
\begin{equation}
\label{eq:Zij}
Z_{ij}=\frac{\displaystyle\prod_{1\le
    b<i}(x_j+y_b)}{\displaystyle\prod_{1\le a<j}(x_j-x_a)},
\end{equation}
cf.\ \eqref{eq:Xij}.
Thus
\[
Z=(Z_{ij})
=\left(
\begin{matrix}
1 & \dfrac{1}{x_2-x_1} & \dfrac{1}{(x_3-x_1)(x_3-x_2)} &\cdots\\[.2in]
x_1+y_1 & \dfrac{x_2+y_1}{x_2-x_1} & \dfrac{x_3+y_1}{(x_3-x_1)(x_3-x_2)} &\cdots\\[.2in]
(x_1+y_1)(x_1+y_2) & \dfrac{(x_2+y_1)(x_2+y_2)}{x_2-x_1} &
\dfrac{(x_3+y_1)(x_3+y_2)}{(x_3-x_1)(x_3-x_2)}
&\cdots\\ 
\vdots&\vdots&\vdots&\ddots
\end{matrix}
\right)\!.
\]
For $I\subset \{1,\dots,n\}$ of cardinality~$k$, we set (cf.~\eqref{eq:s_I})
\begin{equation}
\label{eq:s_I(x|y)}
s_I(x_1,\dots,x_k|y)=\det(Z_{ij})_{i\in I, j\le k} .
\end{equation}
As before, $s_I(x_1,\dots,x_k|y)$ is a symmetric polynomial in
$x_1,\dots,x_k$.

Now let $\lambda=(\lambda_1,\dots,\lambda_k)$ be
a partition with at most $k$ parts satisfying $\lambda_1+k\le n$.
The~\emph{double Schur polynomial} $s_\lambda(x_1,\dots,x_k|y)$
is the polynomial in the variables $x_1,\dots,x_k$ and
$y_1,\dots,y_{k+\lambda_1-1}$ defined by
\begin{equation}
\label{eq:s_lambda(x|y)}
s_\lambda(x_1,\dots,x_k|y)
=s_{I(\lambda)}(x_1,\dots,x_k|y)
=\det(Z_{ij})_{i\in I(\lambda), j\le k},
\end{equation}
where $I(\lambda)$ is given by~\eqref{eq:I(lambda)};
cf.~\eqref{eq:s_lambda}.
To recover the ordinary Schur function, one needs to specialize the
$y$ variables to~$0$.

\begin{example}
\label{example:21-double}
Consider $\lambda=(2,1)$ with $k=2$.
Then $I(\lambda)=\{2,4\}$, and \eqref{eq:s_lambda(x|y)} becomes
\begin{align}
\notag
s_{(2,1)}(x_1,x_2|y)&=\dfrac{(x_1+y_1)(x_2+y_1)}{x_2-x_1}\det\left(
\begin{matrix}
1 & 1 \\[.2in]
(x_1+y_2)(x_1+y_3) &(x_2+y_2)(x_2+y_3)
\end{matrix}
\right)\\[.2in]
\label{eq:double-21}
&=(x_1+y_1)(x_2+y_1)(x_1+x_2+y_2+y_3).
\end{align}
\end{example}

In the special case when $I=\{\ell,\ell+1,\dots,\ell+k-1\}$
is an interval (cf.~\eqref{eq:interval}),
it is straightforward to verify that
\begin{equation}
\label{eq:special-minor-double}
s_I(x_1,\dots,x_k|y)=\det\bigl(Z_{ij}\bigr)_{\substack{\ell\le i\le \ell+k-1\\
1\le j\le k}}
=\prod_{1\le j\le k} \,\prod_{1\le b<\ell}(x_j+y_b),
\end{equation}
generalizing~\eqref{eq:special-minor}.

The algorithm(s) presented in
Section~\ref{sec:main-algorithm} can now be adapted almost verbatim
to the case of double Schur functions.
Indeed, the latter are nothing but the flag minors of the matrix~$Z$;
as such, they can be computed, in an efficient and subtraction-free
way, using the same cluster transformations as before, from the chamber
minors associated with the special pseudoline arrangement~$A^\circ$.
The only difference is in the formulas for those special minors:
here we use \eqref{eq:special-minor-double}
instead of~\eqref{eq:special-minor}.

\subsection*{Supersymmetric Schur functions}

Among many equivalent definitions of supersymmetric Schur functions
(or super-Schur functions for short),
we choose the one most convenient for our purposes,
due to I.~Goulden--C.~Greene~\cite{Goulden-Greene} and
I.~G.~Macdonald~\cite{Macdonald-92}.
We assume the reader's familiarity with the concepts of a
\emph{Young diagram} and a
\emph{semistandard Young tableau} (of some \emph{shape}~$\lambda$);
see, e.g., \cite{Macdonald, EC2} for precise definitions.

We start with a version with an infinite number of variables.
Let $x_1,x_2,\dots$ and $y_1,y_2,\dots$ be two sequences of
indeterminates.
The \emph{super-Schur function}
$s_\lambda(x_1,x_2,\dots;y_1,y_2,\dots)$
is a formal power series defined by
\begin{equation}
\label{eq:super-schur}
s_\lambda(x_1,x_2,\dots;y_1,y_2,\dots)
=\sum_{|T|=\lambda}\, \prod_{s\in\lambda}\, (x_{T(s)}+y_{T(s)+C(s)})
\end{equation}
where
\begin{itemize}
\item
the sum is over all semistandard tableaux $T$ of
shape~$\lambda$ with positive integer entries,
\item
the product is over all boxes~$s$ in the Young diagram of~$\lambda$,
\item
$T(s)$ denotes the entry of $T$ appearing in the box~$s$, and
\item
$C(s)=j-i$ where $i$ and $j$ are the row and column that $s$ is in,
  respectively.
\end{itemize}
We note that $T(s)+C(s)$ is always a positive integer,
so the notation $y_{T(s)+C(s)}$ makes~sense.

While this is not at all obvious from the above definition,
$s_\lambda(x_1,x_2,\dots;y_1,y_2,\dots)$ is symmetric in
the variables $x_1,x_2,\dots$;
it is also symmetric in $y_1,y_2,\dots$;
and is furthermore \emph{supersymmetric} as it satisfies the
cancellation rule
\[
s_\lambda(x_1,x_2,\dots;-x_1,y_2,y_3,\dots)
=s_\lambda(x_2,x_3,\dots;y_2,y_3,\dots).
\]
We will not rely on any of these facts.
We refer interested readers to
aforementioned sources for proofs and further details.

In order to define the super-Schur function in \emph{finitely} many
variables, one simply specializes the unneeded variables to~$0$.
That is, one sets
\begin{equation}
\label{eq:super-specialize}
s_\lambda(x_1,\dots,x_k;y_1,\dots,y_m)
=s_\lambda(x_1,x_2,\dots;y_1,y_2,\dots)
\bigr|_{x_{k+1}=x_{k+2}=\cdots=y_{m+1}=y_{m+2}=\cdots=0}.
\end{equation}
Note that the restriction of the set
of $x$ variables to $x_1,\dots,x_k$ cannot be achieved simply by
requiring the tableaux~$T$ in \eqref{eq:super-schur}
to have entries in $\{1,\dots,k\}$.
A~tableau with an entry $T(s)>k$
may in fact contribute to the (specialized)
super-Schur polynomial: even though $x_{T(s)}$ vanishes under the
specialization,
$y_{T(s)+C(s)}$ does not have to.
See Example~\ref{example:21-super} below.

\begin{example}
[cf.\ Example~\ref{example:21-double}]
\label{example:21-super}
Let $\lambda=(2,1)$, $k=m=2$.
The relevant tableaux~$T$
(i.e., the ones contributing to the
specialization $x_3=x_4=\cdots=y_3=y_4=\cdots=0$)
are:
\[
\begin{array}{cc} 1 & \!1 \\ 2 \end{array}\qquad
\begin{array}{cc} 1 & \!2 \\ 2 \end{array}\qquad
\begin{array}{cc} 1 & \!1 \\ 3 \end{array}\qquad
\begin{array}{cc} 1 & \!2 \\ 3 \end{array}\qquad
\begin{array}{cc} 2 & \!2 \\ 3 \end{array}
\]
Then formulas \eqref{eq:super-schur} and \eqref{eq:super-specialize}
give
\begin{align*}
s_{(2,1)}(x_1,x_2;y_1,y_2)
&=(x_1+y_1)(x_2+y_1)(x_1+y_2)
+(x_1+y_1)(x_2+y_1)x_2 \\
&\quad +(x_1+y_1)y_2(x_1+y_2)
+(x_1+y_1)y_2x_2
+(x_2+y_2)y_2x_2\\
&=x_1x_2(x_1\!+\!x_2)\!+\!(x_1\!+\!x_2)^2(y_1\!+\!y_2)\!+\!(x_1\!+\!x_2)(y_1\!+\!y_2)^2\!+\!y_1y_2(y_1\!+\!y_2).
\end{align*}
Specializing further at $y_2=0$, we obtain
\begin{equation}
\label{eq:super-21}
s_{(2,1)}(x_1,x_2;y_1)
=(x_1+x_2)(x_1+y_1)(x_2+y_1).
\end{equation}
\end{example}

The close relationship between super-Schur functions and
double Schur functions was already exhibited in \cite{Goulden-Greene,
  Macdonald-92}.
For our purposes, we will need the following version of those
classical results.

We denote by $\ell(\lambda)$ the \emph{length} of a partition~$\lambda$,
i.e., the number of its nonzero parts~$\lambda_i$.

\begin{proposition}
\label{prop:super=double}
Assume that $m+\ell(\lambda)\le k+1$.
Then
\begin{equation}
\label{eq:super=double}
s_\lambda(x_1,\dots,x_k;y_1,\dots,y_m)
=s_\lambda(x_1,\dots,x_k|y)\bigr|_{y_{m+1}=y_{m+2}=\cdots=0}.
\end{equation}
\end{proposition}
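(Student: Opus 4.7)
The plan is to reduce both sides of~\eqref{eq:super=double} to a single sum indexed by semistandard Young tableaux, and then show that most terms on the super-Schur side drop out under the hypothesis. The key input is the classical \emph{tableau formula} for the double Schur polynomial: for any partition~$\lambda$ with at most $k$ parts,
\begin{equation}
\label{eq:tableau-double}
s_\lambda(x_1,\dots,x_k|y)=\sum_T \prod_{s\in\lambda}(x_{T(s)}+y_{T(s)+C(s)}),
\end{equation}
where $T$ ranges over semistandard tableaux of shape $\lambda$ with entries in $\{1,\dots,k\}$ and $C(s)=j-i$ for $s=(i,j)$. This identity is essentially Macdonald's ``6th Variation'' \cite{Macdonald-92} (cf.\ also~\cite{Molev}); its consistency with the bialternant definition~\eqref{eq:s_lambda(x|y)} can be checked directly, a small sanity test being the computation behind~\eqref{eq:double-21}. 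In the writeup I would either invoke~\cite{Macdonald-92} or supply a short Lindström--Gessel--Viennot derivation starting from the matrix~$Z$.

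Granted~\eqref{eq:tableau-double}, the right-hand side of~\eqref{eq:super=double} equals
\[
\sum_{T\colon T(s)\le k\text{ for all }s}\ \prod_{s\in\lambda}(x_{T(s)}+y_{T(s)+C(s)})\,\Big|_{y_{m+1}=y_{m+2}=\cdots=0},
\]
while by definitions~\eqref{eq:super-schur} and~\eqref{eq:super-specialize} the left-hand side is the analogous sum over \emph{all} semistandard tableaux $T$ of shape~$\lambda$ (with arbitrary positive integer entries), specialized at $x_{k+1}=x_{k+2}=\cdots=y_{m+1}=y_{m+2}=\cdots=0$. It therefore suffices to show that, under the hypothesis $m+\ell(\lambda)\le k+1$, every tableau~$T$ that contains at least one entry exceeding~$k$ yields a product which vanishes under this specialization.

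Suppose $T(s)>k$ at some box $s=(i,j)$. Then $x_{T(s)}=0$, so the factor at~$s$ reduces to $y_{T(s)+j-i}$. For this factor to survive the $y$-specialization one needs $T(s)+j-i\le m$, i.e., $T(s)\le m+i-j$. Since $i\le \ell(\lambda)$ and $j\ge 1$, we obtain $T(s)\le m+\ell(\lambda)-1\le k$, contradicting $T(s)>k$. Hence every such product vanishes, and the two sums agree.

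The main obstacle is anchoring~\eqref{eq:tableau-double} to the exact normalization dictated by~\eqref{eq:Zij}: the identity itself is well known, but the conventions (signs, index shifts, choice of $y_b$ vs.\ $-a_b$, range of summation on $b$) differ across the literature, so one has to match conventions carefully when quoting~\cite{Macdonald-92} or~\cite{Molev}. Once \eqref{eq:tableau-double} is in hand in the normalization of~$Z$, the rest of the proof is the short vanishing calculation above.
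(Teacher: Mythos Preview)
Your proposal is correct and follows essentially the same route as the paper: invoke Macdonald's tableau formula~\eqref{eq:tableau-double} for the double Schur polynomial, then verify that under the hypothesis $m+\ell(\lambda)\le k+1$ any semistandard tableau containing an entry $T(s)>k$ contributes a vanishing term to the specialized super-Schur sum. Your inequality $T(s)\le m+i-j\le m+\ell(\lambda)-1\le k$ is the contrapositive of the paper's bound $T(s)+C(s)\ge k+2-\ell(\lambda)\ge m+1$, so the arguments coincide.
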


To illustrate, let $\lambda=(2,1)$, $k=2$, $m=1$. Then the left-hand side
of \eqref{eq:super=double} is given by~\eqref{eq:super-21},
which matches~\eqref{eq:double-21} specialized at $y_2=y_3=0$.

The condition $m+\ell(\lambda)\le k+1$ in Proposition~\ref{prop:super=double}
cannot be dropped:
for example, \eqref{eq:super=double} is false for
$\lambda=(2,1)$ and $k=m=2$
(the right-hand side is not even symmetric in $y_1$ and~$y_2$).

\begin{proof}[Proof of Proposition~\ref{prop:super=double}]
First, it has been established in \cite[(6.16)]{Macdonald-92} that
\[
s_\lambda(x_1,\dots,x_k|y)
=\sum_{|T|=\lambda}\, \prod_{s\in\lambda}\, (x_{T(s)}+y_{T(s)+C(s)}),
\]
the sum over all semistandard tableaux $T$ with entries in
$\{1,\dots,k\}$.
Second, in the formula \eqref{eq:super-schur},
a tableau~$T$ with an entry $T(s)>k$ does not contribute
to the specialization~\eqref{eq:super-specialize}
since $T(s)+C(s)\ge k+1+1-\ell(\lambda)\ge m+1$
and consequently $x_{T(s)}+y_{T(s)+C(s)}=0$.
Hence both sides of \eqref{eq:super=double} are given by
the same combinatorial~formulae.
\end{proof}

\begin{theorem}
\label{eq:th:super}
The super-Schur polynomial $s_\lambda(x_1,\dots,x_k;y_1,\dots,y_m)$
can be computed by a subtraction-free arithmetic circuit of size $O((k+m)^3)$,
assuming that $k\ge \lambda_1+\ell(\lambda)-2$.
\end{theorem}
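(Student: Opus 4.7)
The plan is to reduce the super-Schur polynomial to a double Schur polynomial via Proposition \ref{prop:super=double} and then invoke Theorem \ref{th:double}. The subtle point is that Proposition \ref{prop:super=double} requires $m+\ell(\lambda)\le k+1$, which is \emph{not} implied by the theorem's hypothesis $k \ge \lambda_1+\ell(\lambda)-2$ (the example $\lambda=(2,1)$ with $k=m=2$ satisfies the hypothesis but violates the proposition's assumption).

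To circumvent this, I would pad the $x$-alphabet with auxiliary variables that are ultimately set to zero. By the defining convention \eqref{eq:super-specialize} of super-Schur in finitely many variables,
\[
s_\lambda(x_1,\dots,x_k;y_1,\dots,y_m)
=s_\lambda(x_1,\dots,x_{k+m};y_1,\dots,y_m)\bigr|_{x_{k+1}=\cdots=x_{k+m}=0}.
\]
With $k+m$ in the role of $k$, Proposition \ref{prop:super=double} now applies: its condition becomes $\ell(\lambda)\le k+1$, which follows from $\lambda_1+\ell(\lambda)\le k+2$ for nonempty $\lambda$ (the empty case gives the constant~$1$). Thus
\[
s_\lambda(x_1,\dots,x_{k+m};y_1,\dots,y_m)
=s_\lambda(x_1,\dots,x_{k+m}\,|\,y)\bigr|_{y_{m+1}=y_{m+2}=\cdots=0},
\]
and Theorem \ref{th:double} computes the double Schur on the right-hand side by a subtraction-free circuit of size $O((k+m+\lambda_1)^3)=O((k+m)^3)$, using the bound $\lambda_1\le k+1$.

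The main obstacle is justifying the final substitution $x_{k+1}=\cdots=x_{k+m}=0$ inside a subtraction-free circuit: some intermediate cluster mutations of the double Schur algorithm divide by chamber minors that may vanish at this substitution, formally creating $0/0$ expressions. I expect this to be resolvable by either (a) tracking the initial and intermediate chamber minors along the mutation path of Section \ref{sec:main-algorithm} via the explicit formula \eqref{eq:special-minor-double}, and arranging the algorithm (for instance by choosing the pseudoline path of Plan~A carefully) so that every divisor remains nonzero after the substitution; or (b) appealing to the Laurent phenomenon to show that numerator and denominator singularities cancel and rewriting the circuit accordingly, without affecting the $O((k+m)^3)$ size bound. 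Verifying this carefully is the principal technical point of the proof.
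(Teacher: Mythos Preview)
Your overall strategy---pad the $x$-alphabet, invoke Proposition~\ref{prop:super=double}, run the double-Schur algorithm, then specialize---is exactly what the paper does. You also correctly identify the main technical issue: after specialization, some chamber minors in the cluster computation may vanish, creating $0/0$.

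The gap is that you pad to $k+m$ variables, and this is too many. With $k+m$ variables the matrix size is $n'=k+m+\lambda_1$, and the interval minor for $[\ell,\ell+s-1]$ contains the factor $x_{k+1}+y_{m+1}$ whenever $s\ge k+1$ and $\ell\ge m+2$; such intervals fit inside $\{1,\dots,n'\}$ as soon as $\lambda_1\ge 2$, so genuine vanishing of initial minors occurs. Your option~(a) would then require a nontrivial analysis of which minors are actually used along the mutation path, and option~(b) (the Laurent phenomenon) does not by itself avoid dividing by an identically-zero cluster variable.

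The paper sidesteps all of this by padding to the smaller value $k^*=m+\ell(\lambda)-1$ (only when $k<k^*$; otherwise no padding is needed). With this choice, the matrix size is $n^*=m+\ell(\lambda)+\lambda_1-1$, and the vanishing condition $s\ge k+1,\ \ell\ge m+2$ forces $\ell+s-1\ge m+k+2>n^*$ precisely because of the hypothesis $k\ge\lambda_1+\ell(\lambda)-2$. Hence \emph{every} initial interval minor in~\eqref{eq:special-minor-double} survives the specialization, and the standard cluster algorithm runs unmodified. So the missing ingredient in your proposal is simply the right amount of padding; once you set it to~$k^*$, the verification you were worried about reduces to the two-line inequality check just described.
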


\begin{proof}
Denote $k^*=m+\ell(\lambda)-1$.
If $k\ge k^*$, then \eqref{eq:super=double}
holds, and we can compute 
$s_\lambda(x_1,\dots,x_k;y_1,\dots,y_m)$
using the subtraction-free algorithm for a double Schur function, in
time $O((k+\lambda_1)^3)$.

{}From now on, we assume that $k\le k^*-1$.
We can still use~\eqref{eq:super=double} with $k$ replaced by~$k^*$,
and then specialize the extra variables to~$0$:
\begin{equation}
s_\lambda(x_1,\dots,x_k;y_1,\dots,y_m)
=s_\lambda(x_1,\dots,x_{k^*}|y)
_{\bigr|\substack{y_{m+1}=y_{m+2}=\cdots=0\\[.03in]
x_{k+1}=\cdots=x_{k^*}=0}}
\,.
\end{equation}
The plan is to compute the right-hand side using the algorithm described above
for the double Schur functions, with some of the $x$ and $y$ variables specialized
to~$0$:
\begin{equation}
\label{eq:super-vanish}
y_{m+1}=y_{m+2}=\cdots=0, \quad
x_{k+1}=\cdots=x_{k^*}=0.
\end{equation}
In order for this version of the algorithm to work,
we need to make sure that the initial flag minors \eqref{eq:special-minor-double}---and
consequently all chamber minors computed by the algorithm---do not vanish
under~\eqref{eq:super-vanish}.
Note that we do not have to worry about the vanishing of denominators in~\eqref{eq:Zij}
since the algorithm does not rely on the latter formula.
(The specialization as such is always defined since $s_\lambda(x_1,\dots,x_{k^*}|y)$
is a polynomial.)


The algorithm that computes $s_\lambda(x_1,\dots,x_{k^*}|y)$ works with
(specialized) flag minors of a square matrix of size
\[
n^*=k^*+\lambda_1=m+\ell(\lambda)-1+\lambda_1\,.
\]
In the case of an initial flag minor, we have the formula
\begin{equation}
\label{eq:special-minor-specialized}
s_{[\ell,\ell+s-1]}(x_1,\dots,x_s|y)
=\prod_{1\le j\le s} \,\prod_{1\le b<\ell-1}(x_j+y_b)
\end{equation}
(cf.~\eqref{eq:special-minor-double}); here $\ell+s-1\le n^*$, the size of the matrix.
We see that such an initial minor vanishes (identically) under the specialization
\eqref{eq:super-vanish}
if and only if the factor $x_s+y_{\ell-1}$ vanishes, or equivalently $s\ge k+1$ and $\ell-1\ge m+1$.
This however cannot happen since it would imply that
\[
m+k+2\le
\ell+s-1\le
n^*=
m+\ell(\lambda)-1+\lambda_1
\]
which contradicts the condition $k\ge \lambda_1+\ell(\lambda)-2$ in the theorem.
\end{proof}

We expect the condition $k\ge \lambda_1+\ell(\lambda)-2$ in Theorem~\ref{eq:th:super}
to be unnecessary.
Note that one could artificially increase the number of $x$ variables to satisfy this condition,
then specialize the extra variables to~$0$.
Such a specialization however is not included among the operations
allowed in arithmetic circuits.


\section{Skew Schur functions}
\label{sec:skew}

In this section, we use the Jacobi-Trudi identity to reduce the
problem of subtraction-free computation of a skew Schur function to
the analogous problem for the ordinary Schur functions.
This enables us to deduce Theorem~\ref{th:skew} from
Theorem~\ref{th:schur}.

In accordance with usual conventions \cite{Macdonald, EC2},
we denote by $h_m(x_1,\dots,x_k)$
the complete homogeneous symmetric polynomial of degree~$m$.
For $m<0$, one has $h_m=0$ by definition.

Let $\lambda=(\lambda_1,\dots,\lambda_k)$ and
$\nu=(\nu_1,\dots,\nu_k)$ be partitions with at most $k$ parts.
The \emph{skew Schur function} $s_{\lambda/\nu}(x_1,\ldots, x_k)$
can be defined by the \emph{Jacobi-Trudi formula}
\begin{equation}
\label{eq:jacobi-trudi}
s_{\lambda/\nu}(x_1,\ldots, x_k)=\det(h_{\lambda_i-\nu_j-i+j}(x_1,\ldots, x_k)).
\end{equation}
The polynomial $s_{\lambda/\nu}(x_1,\ldots, x_k)$ is nonzero
if and only if $\nu_i\le \lambda_i$ for all~$i$;
the latter condition is abbreviated by $\nu\subset\lambda$.

Formula~\eqref{eq:jacobi-trudi} can be rephrased as saying that $s_{\lambda/\nu}$
is the $k\times k$ minor of the infinite Toeplitz matrix $(h_{i-j})$
that has row set $I(\lambda)$ (see~\eqref{eq:I(lambda)}) and
column set~$I(\nu)$.

Let $n>k$.
We fix the partition~$\nu$, and let $\lambda$ vary over all partitions
satisfying $I(\lambda)\subset\{1,\dots,n\}$,
or equivalently $k+\lambda_1\le n$.
Let us denote by $H_\nu$ the $n\times k$ matrix
\[
H_\nu=(h_{i-j}(x_1,\ldots, x_k))_{\substack{1\le i\le n\\ j\in I(\nu)}}\,.
\]
The maximal (i.e., $k\times k$) minors of $H_\nu$ are the (possibly vanishing)
skew Schur polynomials $s_{\lambda/\nu}(x_1,\ldots, x_k)$.
More generally, a $p\times p$ flag minor of $H_\nu$
is a skew Schur polynomial of the form $s_{\lambda/\nu(p)}$
where $\lambda$ is a partition with at most $p$ parts
satisfying $p+\lambda_1\le n$,
and $\nu(p)=(\nu_{k-p+1},\dots,\nu_k)$
denotes the partition formed by $p$ smallest (possibly zero) parts of~$\nu$.
Such a flag minor does not vanish if and only if $\nu(p)\subset\lambda$.

Our algorithm computes a skew Schur polynomial $s_{\lambda/\nu}(x_1,\ldots, x_k)$
(equivalently, a maximal minor of~$H(\nu)$)
using the same approach as before:
we first compute the initial flag minors corresponding to intervals~\eqref{eq:interval},
then proceed via recursive cluster transformations.

The problem of calculating the interval flag minors of $H_\nu$
(in an efficient and subtraction-free way)
turns out to be equivalent to the (already solved)
problem of computing ordinary Schur polynomials.
This is because $I(\lambda)$ is an interval if and only if $\lambda$ has \emph{rectangular shape},
i.e., all its nonzero parts are equal to each other.
For such a partition, the nonzero skew Schur polynomial $s_{\lambda/\nu}$
is well known to coincide with an ordinary Schur polynomial $s_\theta$
where $\theta$ is the partition formed by the differences $\lambda_i-\nu_i$.

We then proceed, as before,
with a recursive computation utilizing cluster transformations.
However, substantial adjustments have to be made due to the fact that many
flag minors of $H_\nu$ vanish.
(Also, $H_\nu$ is not a square matrix, but this issue is less important.)
Our recipe is as follows.
Suppose that we need to perform a step of our algorithm that involves,
in the notation of Figure~\ref{fig:move},
expressing $f$ in terms of $a,b,c,d,e$.
(It is easy to see that we never have to move in the opposite direction,
i.e., from $a,b,c,d,f$ to~$e$, while moving away from the special arrangement~$A^\circ$
using the algorithms described above.)
If $e\neq 0$ (and we shall know beforehand whether this is the case or not),
then set $f=(ac+bd)/e$ as before.
If, on the other hand, $e=0$, then set~$f=0$.

In order to justify this algorithm, we need to show that the skew Schur polynomials
at hand have the property $e=0\Rightarrow f=0$, in the above notation.
(Also, it is not hard to check in the process of computing a flag minor of size~$k$,
we never need to compute a flag minor of larger size which would not fit into~$H_\nu$.)
This property is a rather straightforward consequence of the criterion
for vanishing/nonvanishing of skew Schur functions.
Let $p<q<r$ denote the labels of the lines shown in Figure~\ref{fig:move},
and let $J$ denote the set of lines passing below the shown fragment.
Then $e=s_{J\cup \{q\}}$ and $f=s_{J\cup \{p.r\}}$.
Since $p<q$, the vanishing of~$e$ implies the vanishing of
$s_{J\cup \{p\}}$, which in turn implies the vanishing of $f=s_{J\cup \{p.r\}}$.
We omit the details.

The complexity of the algorithm is dominated by the initialization stage,
which involves computing $O(n^2)$ ordinary Schur polynomials;
each of them takes $O(n^3)$ operations to compute.
The bit complexity is accordingly $O(n^5\log^2 n)$.

\section{Generating functions for spanning trees
}
\label{sec:spanning-trees}

In this section, we present a polynomial subtraction-free algorithm for computing
the generating function for \emph{spanning trees} in a graph with weighted edges
(a~\emph{network}).
While this algorithm is going to be improved upon in Section~\ref{sec:directed-spanning-trees},
we decided to include it because of its simplicity, and in order to highlight the connection
to the theory of electric networks (equivalently, discrete potential theory).
An impatient reader can go straight to Section~\ref{sec:directed-spanning-trees}.

Let $G$ be an undirected connected graph with vertex set $V$ and edge set~$E$.
We associate a variable $x_e$ to each edge $e\in E$,
and consider the generating function $f_G$ (a polynomial in the variables~$x_e$)
defined by
\[
f_G = \sum_T x^T
\]
where the summation is over all spanning trees $T$ for~$G$,
and $x^T$ denotes the product of the variables $x_e$ over all edges $e$ in~$T$.
An example is given in Figure~\ref{fig:f_G}.

\begin{figure}[ht]
\begin{center}
\setlength{\unitlength}{2.2pt}
\begin{picture}(140,22)(0,1)
\thinlines
\put(0,0){\line(1,0){20}}
\put(0,0){\line(0,1){20}}
\put(20,0){\line(0,1){20}}
\put(20,0){\line(-1,1){20}}
\put(0,20){\line(1,0){20}}

\put(0,0){\circle*{2}}
\put(0,20){\circle*{2}}
\put(20,0){\circle*{2}}
\put(20,20){\circle*{2}}

\put(-4,0){\makebox(0,0){$\mathbf{1}$}}
\put(-4,20){\makebox(0,0){$\mathbf{2}$}}
\put(24,0){\makebox(0,0){$\mathbf{4}$}}
\put(24,20){\makebox(0,0){$\mathbf{3}$}}

\put(-4,10){\makebox(0,0){$x_{12}$}}
\put(25,10){\makebox(0,0){$x_{34}$}}
\put(10,-3){\makebox(0,0){$x_{14}$}}
\put(12,13){\makebox(0,0){$x_{24}$}}
\put(10,23){\makebox(0,0){$x_{23}$}}

\put(40,8){$\begin{array}{rcl}
f_G\!\!&=&\!\!\!x_{12}x_{14}x_{23}+x_{12}x_{14}x_{34}+x_{12}x_{23}x_{24}+x_{12}x_{23}x_{34}\\[.05in]
&\,+&\!\!\!x_{12}x_{24}x_{34}+x_{14}x_{23}x_{24}+x_{14}x_{23}x_{34}+x_{14}x_{24}x_{34}
\end{array}$}

\end{picture}
\end{center}
\caption{A weighted graph $G$ and the spanning tree generating function~$f_G$}
\label{fig:f_G}
\end{figure}
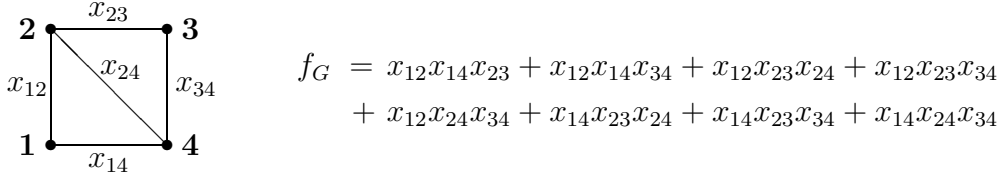

\begin{remark}
\label{rem:simple-graph}
Without loss of generality, we may restrict ourselves to the case when the graph~$G$ is \emph{simple},
that is, $G$~has neither loops (i.e., edges with coinciding endpoints) nor multiple edges.
Loops cannot contribute to a spanning tree, so we can throw them away without altering~$f_G$.
Furthermore, if say vertices $v$ and $w$ are connected by several edges
$e_1,\dots,e_\ell$, then we can replace them by a single edge of weight
$x_{e_1}+\cdots +x_{e_\ell}$ without changing the generating function~$f_G$.
\end{remark}

\pagebreak[3]

Recall that the number of spanning trees in a complete graph on $n$ vertices is equal to~$n^{n-2}$,
so the monomial expansion of $f_G$ may have a superexponential number of terms.
On the other hand, there is a well-known determinantal formula for~$f_G$,
due to G.~Kirchhoff~\cite{Kirchhoff} (see, e.g., \cite[Theorem~II.12]{Bollobas}),
known as the (weighted) Matrix Tree Theorem.
This formula provides a way to compute~$f_G$ in polynomial time---but the calculation
involves subtraction.
Is there a way to efficiently compute $f_G$ using only addition, multiplication, and division?
Just like in the case of Schur functions,
the answer turns out to be \emph{yes}.

\begin{theorem}
\label{th:spanning-trees-sf}
In a weighted simple graph $G$ on $n$ vertices,
the spanning tree generating function $f_G$ can be computed by a subtraction-free
arithmetic circuit of size~$O(n^4)$.
\end{theorem}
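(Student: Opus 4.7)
The plan is to use \emph{star-mesh} vertex elimination as a subtraction-free rewriting rule, justified by the classical weighted Matrix Tree Theorem. Given a vertex $v$ with neighborhood sum $s_v = \sum_{z \sim v} x_{vz}$, define $G \setminus v$ to be the graph on $V \setminus \{v\}$ in which each pair of former neighbors $u,w$ of~$v$ is joined by an edge of weight
\[
x_{uw}' = x_{uw} + \frac{x_{uv}\,x_{vw}}{s_v}
\]
(with $x_{uw}$ interpreted as $0$ if no such edge was present), while all other edges are retained unchanged. The central identity underlying the algorithm is the \emph{star-mesh factorization}
\begin{equation}
\label{eq:star-mesh-plan}
f_G = s_v \cdot f_{G\setminus v},
\end{equation}
whose right-hand side is manifestly a subtraction-free expression in the edge weights of~$G$ and in~$f_{G\setminus v}$.

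I would prove \eqref{eq:star-mesh-plan} by combining the Matrix Tree Theorem with a Schur-complement identity. Fix any root $r \neq v$ and let $L^{(r)}$ denote the weighted Laplacian of~$G$ with the row and column indexed by~$r$ deleted; the Matrix Tree Theorem gives $f_G = \det L^{(r)}$. Since $L^{(r)}_{vv} = s_v$, the standard Schur-complement determinant formula yields $\det L^{(r)} = s_v \cdot \det M$, where $M$ is the Schur complement of $L^{(r)}$ at~$v$. A short calculation identifies $M$ as the reduced Laplacian of $G \setminus v$ with root~$r$: the off-diagonal entry at $(u,w)$ is $-x_{uw} - (-x_{uv})(-x_{vw})/s_v = -x_{uw}'$, while the diagonal entry at~$u$ acquires a correction of $-x_{uv}^2/s_v$, which matches the net change $-x_{uv} + x_{uv}(s_v - x_{uv})/s_v = -x_{uv}^2/s_v$ in $u$'s vertex-weight sum produced by deleting edge $uv$ and adding the new edges at~$u$. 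Applying the Matrix Tree Theorem a second time gives $\det M = f_{G \setminus v}$, which establishes~\eqref{eq:star-mesh-plan}. Crucially, although this justification uses subtraction (through determinants), the identity itself is subtraction-free and can serve as a rewriting rule.

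Iterating \eqref{eq:star-mesh-plan} along any elimination order $v_n, v_{n-1}, \dots, v_2$ of the non-root vertices yields the subtraction-free product formula
\[
f_G = \prod_{i=2}^{n} s_{v_i}^{(i)},
\]
where $s_{v_i}^{(i)}$ denotes the vertex-weight sum of $v_i$ in the graph obtained after eliminating $v_n, \ldots, v_{i+1}$. Each elimination step costs $O(n^2)$ subtraction-free arithmetic operations: one addition (actually at most $n-1$) to form $s_v$, one division to form $1/s_v$, and two multiplications plus one addition for each pair of current neighbors of~$v$. Summing over $n-1$ steps gives a total of $O(n^3)$ operations, well within the $O(n^4)$ bound claimed in the theorem. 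The simple-graph hypothesis is harmless in view of Remark~\ref{rem:simple-graph}.

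The main technical point requiring care is the Schur-complement identification in the middle paragraph, specifically verifying that the diagonal entries of $M$ coincide with the vertex-weight sums of $G \setminus v$; once this algebraic check is in place, both the correctness of \eqref{eq:star-mesh-plan} and the $O(n^3)$ complexity estimate follow immediately, completing the proof.
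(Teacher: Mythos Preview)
Your argument is correct, and in fact it proves more than the theorem asks: you obtain an $O(n^3)$ circuit rather than~$O(n^4)$. However, your route differs from the paper's proof of this particular theorem. The paper's Section~\ref{sec:spanning-trees} argument never establishes the direct factorization $f_G = s_v\,f_{G\setminus v}$; instead it writes $f_G$ as a telescoping product of \emph{effective conductances} via Kirchhoff's formula (Lemma~\ref{lem:eff-conductance} and Corollary~\ref{cor:telescoping}), and then computes each effective conductance separately by star--mesh reductions (Lemma~\ref{lem:star-mesh}). That two-stage scheme is what accounts for the extra factor of~$n$ in the paper's $O(n^4)$ bound.

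Your approach is essentially the undirected specialization of what the paper does in Section~\ref{sec:directed-spanning-trees} (see Lemma~\ref{lem:star-mesh-directed} and Remark~\ref{rem:undir->dir}), where the identity $\varphi_G=(y_1+\cdots+y_k)\,\varphi_{G''}$ is the directed analogue of your~\eqref{eq:star-mesh-plan} and yields the same $O(n^3)$ bound. The one genuinely new ingredient in your write-up is the \emph{proof} of the star--mesh factorization: you derive it from the Matrix Tree Theorem together with a Schur-complement determinant identity, whereas the paper's Section~\ref{sec:directed-spanning-trees} proves its directed version combinatorially, via a Cayley--Pr\"ufer bijection (Lemma~\ref{lem:cayley-prufer}). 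Your linear-algebraic justification is shorter and perhaps more transparent, at the cost of invoking a determinantal formula as a black box; the paper's combinatorial proof is self-contained and works uniformly in the directed setting. Either way, the resulting rewriting rule is identical and subtraction-free, so both approaches are valid.
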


This result is improved to $O(n^3)$ in Section~\ref{sec:directed-spanning-trees}.

The rest of this section is devoted to the proof of Theorem~\ref{th:spanning-trees-sf},
i.e., the description of an algorithm that computes $f_G$ using $O(n^4)$
additions, multiplications, and divisions.
The algorithm utilizes well known techniques
from the theory of electric networks (more precisely, circuits made of ideal resistors).
In order to apply these techniques to the problem at hand, we interpret each edge weight~$x_e$
as the electrical \emph{conductance} of~$e$,
i.e., the inverse of the resistance of~$e$.
We note that the rule, discussed in Remark~\ref{rem:simple-graph},
for combining parallel edges into a single edge is compatible with this interpretation.

\pagebreak[3]

\begin{definition}[Gluing two vertices]
\label{def:gluing-vv'}
Let $v$ and $v'$ be distinct vertices in a weighted simple graph~$G$ as above.
We denote by $G(v,v')$ the weighted simple graph obtained from~$G$ by
\begin{itemize}
\item[(i)]
gluing together the vertices $v$ and~$v'$ into a single vertex which we call~$\boxed{vv'}\,$, then
\item[(ii)]
removing the loop at~$\boxed{vv'}$ (if any), and then
\item[(iii)]
for each vertex $u$ connected in $G$ to both $v$ and~$v'$, say by edges $e$ and~$e'$,
replacing $e$ and $e'$ by a single edge of conductance $x_e+x_{e'}$ between $u$ and~$\boxed{vv'}\,$.
\end{itemize}
In view of Remark~\ref{rem:simple-graph}, steps (ii) and~(iii) do not change
the spanning tree generating function of the graph at hand.
An example is shown in Figure~\ref{fig:f_G12}.
\end{definition}


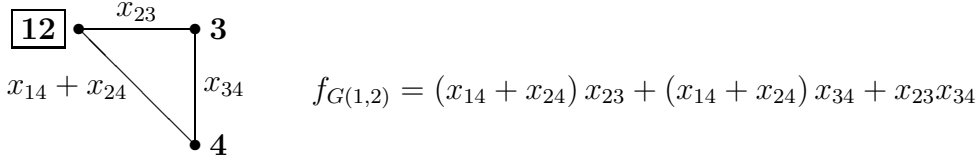
\begin{figure}[ht]
\begin{center}
\setlength{\unitlength}{2.2pt}
\begin{picture}(140,23)(0,0)
\thinlines
\put(20,0){\line(0,1){20}}
\put(20,0){\line(-1,1){20}}
\put(0,20){\line(1,0){20}}

\put(0,20){\circle*{2}}
\put(20,0){\circle*{2}}
\put(20,20){\circle*{2}}

\put(-7,20){\makebox(0,0){$\boxed{\mathbf{12}}$}}
\put(24,0){\makebox(0,0){$\mathbf{4}$}}
\put(24,20){\makebox(0,0){$\mathbf{3}$}}

\put(25,10){\makebox(0,0){$x_{34}$}}
\put(-2,10){\makebox(0,0){$x_{14}+x_{24}$}}
\put(10,23){\makebox(0,0){$x_{23}$}}

\put(40,8){$f_{G(1,2)}
=(x_{14}+x_{24})\,x_{23}+(x_{14}+x_{24})\,x_{34}+x_{23} x_{34}
$}

\end{picture}
\end{center}
\caption{The graph $G(1,2)$ for the graph $G$ in Figure~\ref{fig:f_G}.}
\label{fig:f_G12}
\end{figure}


\begin{lemma}[{Kirchhoff's effective conductance formula~\cite{Kirchhoff};
see, e.g., \cite[Section~2]{Wagner}}]
\label{lem:eff-conductance}
Let~$G$ be a weighted connected simple graph whose edge weights are interpreted as electrical conductances.
The~effective conductance between vertices $v$ and $v'$ of $G$ is given by
\[
\operatorname{effcond}_G(v,v')=\frac{f_G}{f_{G(v,v')}}.
\]
\end{lemma}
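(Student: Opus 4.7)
The plan is to reduce the identity to the (weighted) Matrix Tree Theorem combined with the standard Kirchhoff-law derivation of effective conductance via Cramer's rule. Write $L=L(G)$ for the weighted Laplacian of $G$, with $L_{ii}$ the total conductance at vertex $i$ and $L_{ij}=-x_{ij}$ off-diagonal. By Kirchhoff's classical theorem, $f_G = \det L^{(v)}$ for any vertex~$v$ (the principal minor obtained by deleting row and column~$v$). I would first observe that the Laplacian of the contracted graph $G(v,v')$ is precisely obtained from $L$ by summing rows $v$ and $v'$ and, symmetrically, columns $v$ and $v'$: the ``remove the loop'' step (ii) in Definition~\ref{def:gluing-vv'} exactly cancels the contribution to the merged diagonal entry that would otherwise double-count the edge $\{v,v'\}$, while the ``combine parallel edges'' step (iii) is the correct off-diagonal behavior. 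Applying the Matrix Tree Theorem inside $G(v,v')$ and deleting the merged row and column then gives
\[
f_{G(v,v')} = \det L^{(v,v')},
\]
where $L^{(v,v')}$ denotes $L$ with both rows and both columns indexed by $v$ and $v'$ deleted.

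For the electrical side, I would inject a unit current at $v$ and extract it at $v'$. The resulting node potentials $\phi$ satisfy $L\phi = e_v - e_{v'}$. Grounding $v'$ by setting $\phi_{v'}=0$ and restricting to $V\setminus\{v'\}$ reduces this to $L^{(v')}\tilde\phi = \mathbf{1}_v$, where $\tilde\phi$ is the restriction of $\phi$ and $\mathbf{1}_v$ is the indicator of~$v$. Hence the voltage drop between $v$ and $v'$ equals $\phi_v = \bigl[(L^{(v')})^{-1}\bigr]_{vv}$, which by Cramer's rule is $\det L^{(v,v')}/\det L^{(v')}$. The effective conductance is the reciprocal of this voltage drop per unit of injected current, so
\[
\operatorname{effcond}_G(v,v') \;=\; \frac{\det L^{(v')}}{\det L^{(v,v')}} \;=\; \frac{f_G}{f_{G(v,v')}},
\]
as claimed.

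The main (rather minor) obstacle is the bookkeeping in the first paragraph: carefully matching the Laplacian of $G(v,v')$, defined only after loops are removed and parallel edges combined, with the naive ``add the two rows and columns of $L$'' operation. Once this is confirmed, the remainder is the textbook electric-network computation. As an aside, one could bypass the matrices altogether: reinterpret $f_{G(v,v')}$ combinatorially as the weighted enumeration of spanning $2$-forests of $G$ that separate $v$ from $v'$, via the evident contract/lift bijection (each combined edge of weight $x_e+x_{e'}$ records a choice between $e$ and $e'$), and then invoke the classical $2$-forest formula for effective resistance to reach the same conclusion.
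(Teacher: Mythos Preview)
Your argument is correct and is exactly the standard textbook derivation: the Matrix Tree Theorem identifies $f_G=\det L^{(v')}$ and $f_{G(v,v')}=\det L^{(v,v')}$, while Cramer's rule applied to the current-injection system gives the effective resistance as the ratio of these two determinants. The bookkeeping you flag in the first paragraph does work out, as you can verify directly that summing rows and columns $v,v'$ of $L$ and then deleting the merged row and column produces the same matrix as deleting both from the start.

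There is nothing to compare against in the paper: Lemma~\ref{lem:eff-conductance} is stated there as Kirchhoff's classical result and is not proved, only cited (to Kirchhoff and to Wagner). Your write-up therefore supplies what the paper omits. The alternative route you sketch at the end---identifying $f_{G(v,v')}$ with the generating function for spanning $2$-forests separating $v$ from~$v'$ and then invoking the $2$-forest formula for effective resistance---is also valid and is arguably closer in spirit to the combinatorial treatments in the references the paper cites.
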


To illustrate, the effective conductance between vertices $1$ and~$2$ in the graph shown in
Figure~\ref{fig:f_G} is equal to
$\frac{f_G}{f_{G(1,2)}}$, where $f_G$ and $f_{G(1,2)}$ are given in Figures~\ref{fig:f_G}
and~\ref{fig:f_G12}, respectively.
This matches the formula
\[
\operatorname{effcond}_G(1,2)=x_{12}+\dfrac{1}{\textstyle\frac{\textstyle 1}{\textstyle x_{14}}
+\textstyle\frac{\textstyle 1}{\textstyle x_{24}
+\textstyle\frac{\textstyle 1}{
  \textstyle\frac{\textstyle 1}{\textstyle x_{23}}+\textstyle\frac{\textstyle 1}{\textstyle x_{34}}
}}
}
\]
that can be obtained using the \emph{series-parallel} property of this particular graph~$G$.

\begin{definition}
Let $G$ be a weighted connected simple graph on the vertex set $\{1,\dots,n\}$.
Define the graphs $G_1,\dots,G_n$ recursively by $G_1=G$ and
\[
G_{i+1}=G_i(\boxed{1\cdots i}\,,i+1)
\]
where $\boxed{1\cdots i}$ denotes the vertex obtained by gluing together the original
vertices $1,\dots,i$.
In other words, $G_i$ is obtained from $G$ by collapsing the vertices $1,\dots,i$ into
a single vertex, removing the loops, and combining multiple edges into single
ones while adding their respective weights, cf.\ Remark~\ref{rem:simple-graph}.
\end{definition}

For example, if $G$ is the graph in Figure~\ref{fig:f_G},
then $G_1\!=\!G$; $G_2$ is the graph shown in Figure~\ref{fig:f_G12};
$G_3$ is a two-vertex graph with a single edge of weight $x_{14}+x_{24}+x_{34}$;
and $G_4$ (and more generally~$G_n$) is a single-vertex graph with no edges
(so $f_{G_n}=1$).


The following formula is immediate from Lemma~\ref{lem:eff-conductance}, via telescoping.

\begin{corollary}
\label{cor:telescoping}
Let $G$ be a weighted connected simple graph on the vertex set $\{1,\dots,n\}$.
Then
\[
f_G=\prod_{i=1}^{n-1} \operatorname{effcond}_{G_i}(i,i+1).
\]
\end{corollary}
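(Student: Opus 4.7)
The plan is to observe that the right-hand side telescopes once we apply Kirchhoff's effective conductance formula (Lemma~\ref{lem:eff-conductance}) to each factor.

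First I would rewrite each factor using the definition of $G_{i+1}$. By construction, $G_{i+1} = G_i(\boxed{1\cdots i}, i+1)$, so interpreting $\operatorname{effcond}_{G_i}(i,i+1)$ as the effective conductance between the glued vertex $\boxed{1\cdots i}$ and the vertex $i+1$ in $G_i$, Lemma~\ref{lem:eff-conductance} gives
\[
\operatorname{effcond}_{G_i}(i,i+1) = \frac{f_{G_i}}{f_{G_i(\boxed{1\cdots i},\,i+1)}} = \frac{f_{G_i}}{f_{G_{i+1}}}.
\]
(One should verify that $G_i$ is connected, which follows inductively from connectedness of $G$ since gluing two vertices preserves connectedness.)

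Next I would form the product and telescope:
\[
\prod_{i=1}^{n-1} \operatorname{effcond}_{G_i}(i,i+1) = \prod_{i=1}^{n-1} \frac{f_{G_i}}{f_{G_{i+1}}} = \frac{f_{G_1}}{f_{G_n}}.
\]
Finally, I would identify the two boundary values: $G_1 = G$ by definition, so $f_{G_1} = f_G$; and $G_n$ is a single-vertex graph (with all loops removed), whose unique spanning tree is the empty edge set, so $f_{G_n}=1$. Combining yields the claimed identity.

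There is no real obstacle here; the only subtlety worth flagging is the mild abuse of notation whereby ``vertex $i$'' in $G_i$ refers to the glued vertex $\boxed{1\cdots i}$, and a one-line check that each $G_i$ remains connected (so that Lemma~\ref{lem:eff-conductance} applies and no $f_{G_i}$ vanishes).
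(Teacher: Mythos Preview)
Your proof is correct and matches the paper's own justification: the paper simply states that the formula is immediate from Lemma~\ref{lem:eff-conductance} via telescoping, which is exactly what you have carried out.
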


Corollary~\ref{cor:telescoping} reduces the computation of the generating function~$f_G$
to the problem of computing effective conductances.
The latter can be done, both efficiently and in a subtraction-free way,
using the machinery of \emph{star-mesh transformations} developed by electrical engineers,
see, e.g., \cite[Corollary~4.21]{WKChen}.
The technique goes back at least 100 years, cf.\ the historical discussion in~\cite{Riordan}.

\begin{lemma}[{Star-mesh transformation}]
\label{lem:star-mesh}
Let $v$ be a vertex in a weighted simple graph~$G$ (viewed as an electric network with
the corresponding conductances).
Let $e_1,\dots,e_k$ be the full list of edges incident to~$v$;
assume that they connect $v$ to distinct vertices $v_1,\dots,v_k$, respectively.
Transform $G$ into a new weighted graph $G'$ defined as follows:
\begin{itemize}
\item
remove vertex $v$ and the edges $e_1,\dots,e_k$ incident to it;
\item
for all $1\le i<j\le k$, introduce a new edge $e_{ij}$ connecting $v_i$ and~$v_j$,
and assign
\begin{equation}
\label{eq:star-mesh}
x_{e_{ij}} \stackrel{\rm def}{=} x_{e_i} x_{e_j} \sum_{\ell=1}^k \frac{1}{x_{e_\ell}}
\end{equation}
as its weight (=conductance);
\item
in the resulting graph, combine parallel edges into single ones,
as in Remark~\ref{rem:simple-graph}.
\end{itemize}
Then the weighted graphs $G$ and $G'$ have the same effective conductances.
More precisely, for any pair of vertices $a,b$ different from~$v$, we have
\[
\operatorname{effcond}_G(a,b)=\operatorname{effcond}_{G'}(a,b).
\]
\end{lemma}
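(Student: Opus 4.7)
I propose the classical ``imposed potentials'' argument, adapted to work with formal indeterminates $x_{e_\ell}$. The idea is to show that $G$ and $G'$ are indistinguishable when viewed from the outside of~$v$: for any assignment of potentials at the remaining vertices, the external current that must be injected at each vertex $u\neq v$ to sustain those potentials is the same in both networks. Since the effective conductance $\operatorname{effcond}(a,b)$ is (by definition or by Lemma~\ref{lem:eff-conductance}) a quantity that depends only on this external current/potential response, this will force $\operatorname{effcond}_G(a,b)=\operatorname{effcond}_{G'}(a,b)$ for every pair $a,b\neq v$.

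First I would fix potentials $U_{v_1},\dots,U_{v_k}$ at the neighbors of~$v$ and use Kirchhoff's current law at~$v$, $\sum_i x_{e_i}(U_{v_i}-U_v)=0$, to solve for
\[
U_v \;=\; \frac{\sum_i x_{e_i}U_{v_i}}{\sum_\ell x_{e_\ell}}.
\]
Substituting back, the current $x_{e_i}(U_{v_i}-U_v)$ leaving $v_i$ along $e_i$ rewrites as a telescoping sum
\[
x_{e_i}(U_{v_i}-U_v)\;=\;\sum_{j\neq i}\frac{x_{e_i}x_{e_j}}{\sum_\ell x_{e_\ell}}\,(U_{v_i}-U_{v_j}),
\]
by a one-line manipulation. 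Comparing with the current $\sum_{j\neq i} x_{e_{ij}}(U_{v_i}-U_{v_j})$ that flows out of $v_i$ through the new edges in~$G'$, one reads off the correct star-mesh conductance $x_{e_{ij}}$ that balances the two expressions coefficient-by-coefficient. All edges of $G$ and $G'$ not incident to~$v$ coincide, so their contributions to the external current at each $u\neq v$ are identical; combined with the preceding calculation, the full external current profile agrees, which gives the claim.

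The main obstacle is the fact that $x_{e_\ell}$ are formal variables, so talk of ``potentials'' and ``currents'' must be given an algebraic meaning. I would handle this by encoding the above argument in terms of the (weighted) Laplacians $L_G$ and $L_{G'}$: ordering the vertices so that $v$ comes last, Kirchhoff's elimination of $U_v$ is precisely the Schur complement of the $1\times 1$ bottom-right block of~$L_G$, and a direct computation of this Schur complement produces exactly the Laplacian $L_{G'}$ of the star-mesh transformed graph (after the parallel-edge combination of Remark~\ref{rem:simple-graph} is invoked to identify off-diagonal entries). Effective conductances are, via Lemma~\ref{lem:eff-conductance} and the Matrix-Tree Theorem, particular ratios of principal minors of the Laplacian, and such ratios are invariant under Schur complementation. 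This promotes the physical identity to an identity of rational functions in the variables $x_{e_\ell}$, completing the proof.
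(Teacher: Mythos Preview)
The paper does not actually prove this lemma; it is quoted as a classical fact from electrical network theory with a pointer to \cite[Corollary~4.21]{WKChen}. Your imposed-potentials/Schur-complement argument is the standard proof and is sound: eliminating the row and column of $v$ from the weighted Laplacian via the Schur complement of the $1\times 1$ block $\sum_\ell x_{e_\ell}$ produces precisely the Laplacian of the mesh graph, and Lemma~\ref{lem:eff-conductance} together with the Matrix-Tree Theorem then identifies the effective conductances.

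There is, however, one point you glide over that deserves to be made explicit. Your computation produces
\[
x_{e_{ij}}=\frac{x_{e_i}x_{e_j}}{\sum_{\ell} x_{e_\ell}},
\]
while the formula \eqref{eq:star-mesh} as printed is $x_{e_i}x_{e_j}\sum_\ell x_{e_\ell}^{-1}$. These disagree already for $k=2$ (two conductances in series combine to $x_1x_2/(x_1+x_2)$, not $x_1+x_2$). Your formula is the correct one---compare the directed version \eqref{eq:directed-star-mesh}, which does carry the factor $(\sum_\ell y_\ell)^{-1}$---so the discrepancy is almost certainly a typographical slip in the statement rather than an error in your argument. Since you only say you ``read off the correct star-mesh conductance~$x_{e_{ij}}$'' without writing it down, you should state the formula you obtain and flag the mismatch.
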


Lemma~\ref{lem:star-mesh} provides an efficiently way
to compute an effective conductance between two given vertices $a$ and~$b$ in a graph~$G$,
by iterating the star-mesh transformations~\eqref{eq:star-mesh}
for all vertices $v\notin\{a,b\}$, one by one.
Since these transformations are subtraction-free,
and require $O(n^2)$ arithmetic operations each,
we arrive at the following result.

\begin{corollary}
\label{cor:eff-cond-via-star-mesh}
An effective conductance between two given vertices in an $n$-vertex weighted
simple graph~$G$ can be computed by a subtraction-free arithmetic circuit of size~$O(n^3)$.
\end{corollary}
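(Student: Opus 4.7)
The plan is to reduce the graph to a two-vertex graph on $\{a,b\}$ by iteratively applying the star-mesh transformation of Lemma~\ref{lem:star-mesh} to eliminate, one at a time, each of the other $n-2$ vertices. After all eliminations, what remains is a (multi)graph on $\{a,b\}$; combining its parallel edges as in Remark~\ref{rem:simple-graph} yields a single edge whose conductance is, by iterated application of Lemma~\ref{lem:star-mesh}, equal to $\operatorname{effcond}_G(a,b)$. (Equivalently, applying Lemma~\ref{lem:eff-conductance} to a two-vertex graph with a single edge $e$ gives $\operatorname{effcond}(a,b)=x_e/1=x_e$.)

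For the complexity count, fix an arbitrary ordering $v_1,\dots,v_{n-2}$ of the vertices to be eliminated. At the moment we eliminate $v_i$, its degree $k$ is at most $n-i-1\le n$. The cost of that star-mesh step is dominated by (i) the single subtraction-free computation of $\sum_{\ell=1}^{k} 1/x_{e_\ell}$, which requires $O(k)$ operations, and (ii) assigning the $\binom{k}{2}$ new edge weights via formula~\eqref{eq:star-mesh}, each costing $O(1)$ additional operations once the sum is cached. Merging the resulting parallel edges as in Remark~\ref{rem:simple-graph} introduces at most $O(k^2)$ further additions. Hence each star-mesh step uses $O(n^2)$ operations, and the total over $n-2$ steps is $O(n^3)$.

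Finally, subtraction-freeness is immediate: the transformation formula~\eqref{eq:star-mesh} and the merging rule from Remark~\ref{rem:simple-graph} both use only $+$, $*$, and $/$, and the same is true for the terminal read-off of the conductance on the last remaining edge. The only non-routine point is the observation that the star-mesh transformation is valid throughout, i.e.\ that the neighbors of the vertex currently being eliminated are distinct. This is ensured by invoking the parallel-edge merging step after each elimination, which keeps the working graph simple (possibly with the exception of an isolated pair $a,b$ at the very end, where the final merge produces the output). No additional estimate is needed, so this is not really an obstacle; the entire argument is a straightforward aggregation of Lemma~\ref{lem:star-mesh} and the per-step complexity bound.
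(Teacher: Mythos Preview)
Your proof is correct and follows essentially the same approach as the paper: iterate the star-mesh transformation of Lemma~\ref{lem:star-mesh} over the $n-2$ vertices other than $a$ and~$b$, observe that each step costs $O(n^2)$ subtraction-free operations, and read off the effective conductance from the resulting two-vertex graph. Your write-up is more detailed than the paper's (which dispatches the corollary in a sentence), and the extra care about keeping the working graph simple via parallel-edge merging is a welcome clarification; the minor off-by-one in the degree bound ($n-i$ rather than $n-i-1$) is harmless for the $O(n^3)$ estimate.
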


Combining Corollaries~\ref{cor:telescoping} and~\ref{cor:eff-cond-via-star-mesh},
we obtain a proof of Theorem~\ref{th:spanning-trees-sf}.
The algorithm computes the effective conductances $\operatorname{effcond}_{G_i}(i,i+1)$
for $i=1,\dots,n-1$ using star-mesh transformations, then multiplies them to get
the generating function~$f_G$.

\section{Directed spanning trees}
\label{sec:directed-spanning-trees}

In this section, we treat the directed version of the problem considered in
Section~\ref{sec:spanning-trees},
designing a polynomial subtraction-free algorithm that computes
the generating function for directed spanning trees in a directed graph with weighted edges.

Similarly to the unoriented case,
our approach makes use of the appropriate version of star-mesh transformations.
As before, they are local modifications of the network
which transform the weights by means of certain subtraction-free formulas.
There is also a difference: unlike in Section~\ref{sec:spanning-trees},
we apply these transformations directly to the computation of the generating functions
of interest---rather than to ``effective conductances'' from which those generating functions
can be recovered via telescoping.
Adapting the latter technique to the directed case
would require a thorough review of W.~Tutte's theory of ``unsymmetrical electricity''
\cite[Sections VI.4--VI.5]{Tutte-GT} \cite[Section~4]{Tutte-1998}.
This elementary but somewhat obscure theory goes back to the 1940s,
see references in \emph{loc.\ cit.},
and is closely related to Tutte's directed version of the Matrix-Tree Theorem
\cite[Theorem~6.27]{Tutte-GT}.

\begin{remark}
\label{rem:undir->dir}
The approach used in this section can be applied in the undirected case as well,
bypassing the use of electric networks
(cf.\ Section~\ref{sec:spanning-trees}).
Also, one can reduce the undirected case to the directed one
by replacing each edge~$a\stackrel{e}{\mbox{---}}b$ in an ordinary weighted graph
by two oriented edges $a\rightarrow b$ and $b\rightarrow a$ each having the weight~$x_e$
of the original~edge.
\end{remark}

In this section, $G$ is a \emph{directed graph} with vertex set $V$ and edge set~$E$,
and with a fixed vertex $r\in V$ called the \emph{root}.
A \emph{directed spanning tree} $T$ in~$G$
(sometimes called an \emph{in-tree}, an \emph{arborescence},
or a \emph{branching}) is a subgraph of $G$ that spans all vertices in~$V$
and includes a subset of edges such that for any $v\in V$, there is a unique path in~$T$ that begins
at~$v$ and ends at~$r$.
Equivalently, $T$ is a spanning tree of~$G$ 
in which all edges are oriented towards~$r$.

We assume that $G$ has at least one such tree,
or equivalently that there is a path from any vertex $v\in V$ to the root~$r$.

We associate a variable $x_e$ to each (directed) edge $e\in E$,
and define the generating function $\varphi_G$ by
\[
\varphi_G = \sum_T x^T
\]
where the summation is over all directed spanning trees $T$ for~$G$ (rooted at~$r$).
As before, $x^T$ denotes the product of the variables $x_e$ over all edges $e$ in~$T$.
Figure~\ref{fig:varphi_G} shows the generating function $\varphi_G$ for the
\emph{complete directed graph} on three vertices.

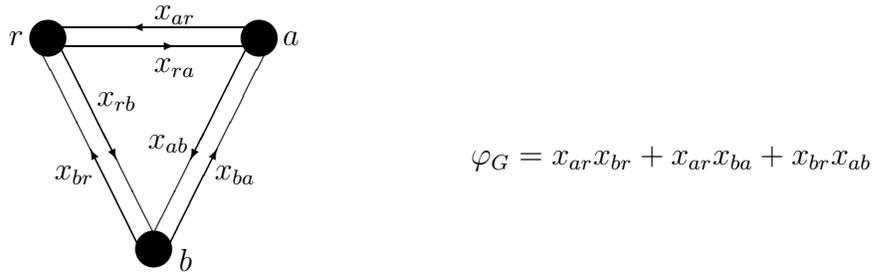
\begin{figure}[ht]
\begin{center}
\setlength{\unitlength}{4pt}
\begin{picture}(70,22)(0,1)
\thinlines
\put(0,21){\line(1,0){20}}
\put(0,19.2){\line(1,0){20}}
\put(9,-.5){\line(-1,2){10}}
\put(10.5,.5){\line(-1,2){10}}
\put(11,-.5){\line(1,2){10}}
\put(9.5,.5){\line(1,2){10}}

\put(0,19.2){\vector(1,0){12}}
\put(20,21){\vector(-1,0){12}}
\put(9,-.5){\vector(-1,2){5}}
\put(.5,20.5){\vector(1,-2){6}}
\put(11,-.5){\vector(1,2){5}}
\put(19.5,20.5){\vector(-1,-2){6}}

\put(0,20){\circle*{3.5}}
\put(20,20){\circle*{3.5}}
\put(10,0){\circle*{3.5}}

\put(-3,20){\makebox(0,0){$r$}}
\put(23,20){\makebox(0,0){$a$}}
\put(13,-1){\makebox(0,0){$b$}}

\put(12,22.2){\makebox(0,0){$x_{ar}$}}
\put(12,17.2){\makebox(0,0){$x_{ra}$}}
\put(6.5,14){\makebox(0,0){$x_{rb}$}}
\put(2.5,7){\makebox(0,0){$x_{br}$}}
\put(11.4,10){\makebox(0,0){$x_{ab}$}}
\put(17.7,7){\makebox(0,0){$x_{ba}$}}

\put(40,8){$\varphi_G=x_{ar}x_{br}+x_{ar}x_{ba}+x_{br}x_{ab}$}

\end{picture}
\end{center}
\caption{The generating function~$\varphi_G$ for the directed spanning trees in~$G$.}
\label{fig:varphi_G}
\end{figure}

Without loss of generality, we may assume that $G$ is a \emph{simple} directed graph,
i.e., it has no loops and no multiple edges, for the same reasons as in Remark~\ref{rem:simple-graph}.
We certainly do allow pairs of edges connecting the same pair of vertices but oriented in
opposite ways.

\begin{theorem}
\label{th:directed-spanning-trees-sf}
In a weighted simple directed graph $G$ on $n$ vertices, 
the generating function for directed spanning trees rooted at a given vertex~$r$
can be computed by a subtraction-free
arithmetic circuit of size~$O(n^3)$.
\end{theorem}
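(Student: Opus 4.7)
The plan is to develop a directed analogue of the star-mesh transformation and apply it iteratively to eliminate non-root vertices one at a time, accumulating a scalar factor at each step, thereby assembling $\varphi_G$ as a product of $n-1$ subtraction-free rational expressions.

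Concretely, fix a non-root vertex $v\neq r$, and let
\[
s_v = \sum_{k\colon (v,k)\in E} x_{vk}
\]
denote the total weight of edges outgoing from~$v$. Define a weighted simple directed graph $G'$ on vertex set $V\setminus\{v\}$ by setting, for each ordered pair $i,j\neq v$ with $i\neq j$,
\[
x'_{ij} \;=\; x_{ij} \;+\; \frac{x_{iv}\,x_{vj}}{s_v},
\]
where $x_{iv}$ (respectively $x_{vj}$) is taken to be zero if the corresponding edge is absent in~$G$. This formula is manifestly subtraction-free and requires $O(1)$ arithmetic operations per entry once $s_v$ has been computed.

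The main identity underlying the algorithm is
\[
\varphi_G \;=\; s_v \cdot \varphi_{G'}.
\]
The most efficient way to prove this is via W.~Tutte's Matrix-Tree Theorem \cite[Theorem~6.27]{Tutte-GT}, which realizes $\varphi_G$ as $\det(L^{(r)})$, where $L$ is the directed Laplacian defined by $L_{ii}=\sum_{k}x_{ik}$ and $L_{ij}=-x_{ij}$ for $i\neq j$, and $L^{(r)}$ denotes the principal submatrix obtained by deleting row and column~$r$. Performing a one-step Gaussian elimination / Schur complement along the row and column of~$v$ gives $\det(L^{(r)}) = L^{(r)}_{vv}\cdot \det(L^{(r)}/v)$; a direct calculation shows $L^{(r)}_{vv}=s_v$ and that the $(i,j)$-entry of the Schur complement equals $-x'_{ij}$ off the diagonal and the correct outgoing-sum $\sum_{k\neq v}x'_{ik}$ on the diagonal. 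Hence $L^{(r)}/v$ is precisely the Laplacian of $G'$ with row and column $r$ removed, and applying the Matrix-Tree Theorem to $G'$ yields the claim.

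Given this, the algorithm is immediate: choose any ordering $v_1,\dots,v_{n-1}$ of $V\setminus\{r\}$, set $G^{(0)}=G$, and successively form $G^{(k)}$ from $G^{(k-1)}$ by a star-mesh elimination of~$v_k$, recording the corresponding scalar $s^{(k)}$. Since $G^{(n-1)}$ consists of the single vertex $r$, we have $\varphi_{G^{(n-1)}}=1$, so telescoping gives
\[
\varphi_G \;=\; \prod_{k=1}^{n-1} s^{(k)}.
\]
Each elimination updates $O(n^2)$ edge weights with $O(1)$ subtraction-free operations per update (after a single $O(n)$ precomputation of $s^{(k)}$), giving an aggregate count of $O(n^3)$ additions, multiplications, and divisions, and no subtractions. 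The main obstacle is the correctness of the star-mesh formula; once verified via the Schur complement argument above, the complexity bound and the subtraction-free property are immediate. (For bit complexity, one notes that all intermediate rational functions can be kept as numerator/denominator pairs whose bit sizes grow polynomially, since each star-mesh step increases them by a bounded amount.)
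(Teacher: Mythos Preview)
Your algorithm is identical to the paper's: iterate the directed star--mesh transformation to eliminate non-root vertices one by one, accumulating the out-weight factor $s_v$ at each step. The difference lies entirely in how the key identity $\varphi_G = s_v\,\varphi_{G'}$ (the paper's Lemma~\ref{lem:star-mesh-directed}) is justified. The paper proves it combinatorially: it groups directed spanning trees of $G$ and of $G'$ according to the rooted set partition of $\{v_1,\dots,v_k,r\}$ induced by the forest left over after deleting the edges at~$v$ (respectively, the mesh edges), and then collapses the mesh-side sums using the Cayley--Pr\"ufer identity (Lemma~\ref{lem:cayley-prufer}). You instead derive the identity algebraically, by applying Tutte's Matrix-Tree Theorem to both $G$ and $G'$ and checking that a one-step Schur complement of the directed Laplacian at~$v$ produces exactly the Laplacian of~$G'$; this is a route the paper alludes to but deliberately does not take. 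Your argument is shorter and entirely mechanical once the Matrix-Tree Theorem is granted; the paper's proof is self-contained (it never invokes Matrix-Tree) and exhibits an explicit bijective reason for the identity. Both yield the same $O(n^3)$ subtraction-free circuit.
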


In view of Remark~\ref{rem:undir->dir}, the analogue of Theorem~\ref{th:directed-spanning-trees-sf}
for undirected graphs follows, improving upon Theorem~\ref{th:spanning-trees-sf}
and implying Theorem~\ref{th:combined-spanning-trees}.

The algorithm that establishes Theorem~\ref{th:directed-spanning-trees-sf}
relies on the following lemma.

\begin{lemma}[{Star-mesh transformation in a directed network}]
\label{lem:star-mesh-directed}
Let $v\neq r$ be a vertex in a weighted directed graph~$G$ as above.
Let $v_1,\dots,v_k$ be the full list of vertices directly connected to~$v$
by an edge (either incoming, or outgoing, or both).
For $i=1,\dots,k$, let~$x_i$ (resp.,~$y_i$)
denote the weight of the edge $v_i\rightarrow v$ (resp., $v\rightarrow v_i$);
in the absence of such edge, set $x_i\!=\!0$ (resp., $y_i\!=\!0$).
Transform $G$ into a new weighted directed graph $G''$ as follows:
\begin{itemize}
\item
remove vertex $v$ and all the edges incident to it;
\item
for each pair $i,j\in\{1,\dots,k\}$, $i\neq j$, $x_i y_j\neq 0$,
introduce a new edge $e_{ij}$ directed from $v_i$ to~$v_j$,
and set its weight to be
\begin{equation}
\label{eq:directed-star-mesh}
x_{e_{ij}} \stackrel{\rm def}{=} x_i \,y_j \,(y_1+\cdots+y_k)^{-1};
\end{equation}
\item
in the resulting graph~$G'$, combine multiple edges (if any), adding their respective weights,
to obtain~$G''$. (Thus $\varphi_{G''}=\varphi_{G'}\,$.)
\end{itemize}
Then
\begin{equation}
\label{eq:directed-star-mesh-gf}
\varphi_G= (y_1+\cdots+y_k)\,\varphi_{G''}\,.
\end{equation}
\end{lemma}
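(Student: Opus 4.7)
The cleanest route I see is via the directed Matrix--Tree Theorem of Tutte~\cite[Theorem~6.27]{Tutte-GT} (already cited in the paper), combined with a single Schur--complement step at the vertex~$v$. Write $w(u\!\to\! u')$ for the weight of the edge $u\!\to\! u'$ in $G$ (with $w(u\!\to\! u')=0$ if no such edge exists), and let $L$ denote the weighted out-Laplacian of~$G$:
$L_{uu}=\sum_{u'} w(u\!\to\! u')$ and $L_{uu'}=-w(u\!\to\! u')$ for $u\ne u'$. Tutte's theorem, applied to arborescences pointing towards~$r$, yields $\varphi_G=\det \bar L_r$, where $\bar L_r$ is obtained from $L$ by deleting the row and column indexed by~$r$. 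An entirely analogous formula holds for~$G''$.

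Since $v\ne r$, the row and column indexed by $v$ survive in $\bar L_r$, with diagonal entry $L_{vv}=y_1+\cdots+y_k$. I would use this entry as a pivot and perform one step of Schur complement to obtain the factorization
\[
\det \bar L_r \;=\; L_{vv}\cdot \det \tilde L \;=\; (y_1+\cdots+y_k)\cdot \det \tilde L,
\]
where $\tilde L$ is the resulting matrix indexed by $V\setminus\{r,v\}$. The proof would then conclude by showing that $\tilde L$ coincides with $\bar L''_r$, the reduced out-Laplacian of~$G''$; a second application of the Matrix--Tree Theorem then identifies $\det\tilde L$ with $\varphi_{G''}$ and yields~\eqref{eq:directed-star-mesh-gf}.

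The verification $\tilde L=\bar L''_r$ is the heart of the argument. Entries not involving any $v_i$ are unchanged by the Schur step (since $L_{uv}=0$ whenever $v$ is not an out-neighbor of $u$, and $L_{vu'}=0$ whenever $u'$ is not an out-neighbor of~$v$), and they are also unchanged when passing from $L$ to $L''$. For distinct $i,j$, the Schur--complement formula gives
\[
\tilde L_{v_iv_j}\;=\;L_{v_iv_j}-\frac{L_{v_iv}\,L_{vv_j}}{L_{vv}}
\;=\;-w(v_i\!\to\! v_j)-\frac{x_i y_j}{y_1+\cdots+y_k},
\]
which matches $-w^{G''}(v_i\!\to\! v_j)$ by the star-mesh definition~\eqref{eq:directed-star-mesh} and the rule for combining parallel edges. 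The main obstacle is the diagonal identity $\tilde L_{v_iv_i}=L''_{v_iv_i}$. It turns on a sign cancellation: the Schur side produces a correction $-x_iy_i/Y$ to $L_{v_iv_i}$, where $Y=y_1+\cdots+y_k$; on the $L''$ side, the diagonal at $v_i$ differs from $L_{v_iv_i}$ by $-x_i$ (the lost out-edge $v_i\!\to\! v$) plus $\sum_{j\ne i} x_i y_j/Y = x_i(Y-y_i)/Y$ (new parallel contributions coming from the edges $e_{ij}$), and these two corrections telescope precisely to $-x_iy_i/Y$. Once this bookkeeping is confirmed, the lemma follows immediately.
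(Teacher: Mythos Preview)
Your proof is correct and takes a genuinely different route from the paper's.

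The paper argues combinatorially: it splits the edge set into $\operatorname{Star}_v$ and $\operatorname{Out}_v$, groups the terms of $\varphi_G$ and $\varphi_{G'}$ according to the rooted set partition~$\Pcal$ that the non-star (resp.\ non-mesh) edges induce on $\{v_1,\dots,v_k\}\cup\{r\}$, and then verifies $A(\Pcal)=(y_1+\cdots+y_k)\,A'(\Pcal)$ block by block. The last step is where the work is: the mesh side $A'(\Pcal)$ is itself a sum over directed trees on the blocks of~$\Pcal$, and collapsing that sum to $Y_r\,Y^{-1}\prod_{a\neq r} x_a$ is done by invoking the Cayley--Pr\"ufer identity (stated separately as Lemma~\ref{lem:cayley-prufer}). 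Your approach replaces this forest decomposition by a single linear-algebra step: apply Tutte's Matrix--Tree formula on both sides and observe that one Schur-complement pivot at~$v$ takes $\bar L_r$ to $\bar L''_r$, with the pivot value $L_{vv}=Y$ supplying the scalar factor. The off-diagonal check is immediate from~\eqref{eq:directed-star-mesh}, and your diagonal telescoping $-x_i + x_i(Y-y_i)/Y = -x_iy_i/Y$ is exactly right. The trade-off is that your argument is shorter and mechanically verifiable but imports the Matrix--Tree theorem as a black box, whereas the paper's proof is self-contained at the level of tree enumeration and makes the combinatorial meaning of the star--mesh weights visible (at the cost of the auxiliary Cayley--Pr\"ufer lemma and a more delicate case analysis).
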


We note that $y_1+\cdots+y_k\neq 0$ since otherwise there is no path from $v$ to~$r$.
(If that happens, we have $\varphi_G=0$.)

It is easy to see that Lemma~\ref{lem:star-mesh-directed} implies
Theorem~\ref{th:directed-spanning-trees-sf}.
The algorithm computes the generating function $\varphi_G$ by iterating the star-mesh
transformations described in the lemma.

\begin{example}
Consider the weighted graph in Figure~\ref{fig:varphi_G}.
Choose $v=b$. The recipe in Lemma~\ref{lem:star-mesh-directed}
asks us to remove the vertex~$b$ and the four edges incident to it,
introducing instead two edges connecting $r$ and~$a$.
According to the formula~\eqref{eq:directed-star-mesh}, the new edge in $G'$
pointing from $a$ to~$r$
has weight $x_{ab}\,x_{br}(x_{ba}+x_{br})^{-1}$.
Adding this to the weight $x_{ar}$ of the old edge $a\rightarrow r$,
we obtain the combined weight of the edge going from $a$ to~$r$ in the
two-vertex graph~$G''$.
Thus
\[
\varphi_{G''}=x_{ar}+\frac{x_{ab}\,x_{br}}{x_{ba}+x_{br}}
=\frac{x_{ar}\,x_{ba}+x_{ar}\,x_{br}+x_{ab}\,x_{br}}{x_{ba}+x_{br}}.
\]
Then \eqref{eq:directed-star-mesh-gf} gives
\[
\varphi_G= (x_{ba}+x_{br})\,\varphi_{G''}=x_{ar}\,x_{ba}+x_{ar}\,x_{br}+x_{ab}\,x_{br}\,,
\]
matching the result of a direct calculation in Figure~\ref{fig:varphi_G}.
\end{example}

It remains to prove Lemma~\ref{lem:star-mesh-directed}.
The proof uses a classical result
(see, e.g., \cite[Theorem~5.3.4]{EC2}, with $k\!=\!1$)
sometimes called ``the Cayley-Pr\"ufer theorem;'' it is indeed immediate from Pr\"ufer's
celebrated proof of Cayley's formula for the number of spanning trees.
We state this result in a version best suited for our purposes.

\begin{lemma}
\label{lem:cayley-prufer}
Let $H$ be a complete directed graph on the vertex set~$W$, with root~$r\in W$.
For $v\!\in\! W$, let $z_v$ be a formal variable.
Assign to every edge $a\rightarrow b$ in $H$ the weight~$z_b\,(\sum_v z_v)^{-1}$.
Then substituting these weights into $\varphi_H$ gives $z_r\,(\sum_v z_v)^{-1}$.
\end{lemma}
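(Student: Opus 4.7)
The plan is to expand $\varphi_H$ directly using the prescribed weights and then evaluate the resulting sum via Pr\"ufer's bijection. Write $n=|W|$ and $S=\sum_{v\in W} z_v$. For any directed spanning tree $T$ of $H$ rooted at $r$, each non-root vertex $u$ has a unique outgoing edge pointing to its parent $p_T(u)$; since the weight of that edge is $z_{p_T(u)}/S$, the first step will be to note that
\[
x^T \;=\; \prod_{u\neq r} \frac{z_{p_T(u)}}{S} \;=\; S^{-(n-1)} \prod_{v\in W} z_v^{c_T(v)},
\]
where $c_T(v)$ denotes the number of children of $v$ in $T$. Thus the lemma reduces to establishing
\[
\sum_T \prod_{v\in W} z_v^{c_T(v)} \;=\; z_r\, S^{\,n-2}.
\]

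The second step is to invoke the classical Pr\"ufer bijection between spanning trees of the complete graph on $W$ and sequences $(p_1,\dots,p_{n-2}) \in W^{n-2}$, under which any vertex $v\in W$ appears in the code exactly $\deg_T(v)-1$ times (where $\deg_T$ refers to the underlying unoriented tree). A short bookkeeping computation then shows that $c_T(v)=\deg_T(v)-1$ for every $v\neq r$, while $c_T(r)=\deg_T(r)$: the root alone has no parent, so its child count equals its unoriented degree. Consequently
\[
\prod_{v} z_v^{c_T(v)} \;=\; z_r\, \prod_v z_v^{\deg_T(v)-1} \;=\; z_r\, z_{p_1}\cdots z_{p_{n-2}}.
\]

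Summing over all trees (equivalently, over all codes in $W^{n-2}$) then yields
\[
\sum_T \prod_{v} z_v^{c_T(v)} \;=\; z_r \sum_{(p_1,\dots,p_{n-2})\in W^{n-2}} z_{p_1}\cdots z_{p_{n-2}} \;=\; z_r\, S^{\,n-2},
\]
and combining with the earlier expression for $x^T$ gives $\varphi_H = S^{-(n-1)}\cdot z_r S^{n-2} = z_r/S$, as required. I do not expect a genuine obstacle here; the only subtle point is the off-by-one discrepancy between the in-tree child count and the unoriented degree at the root, which is precisely what produces the prefactor $z_r$. As a cross-check one can rederive the same answer through Tutte's directed Matrix-Tree Theorem: the substituted weighted Laplacian takes the rank-one form $I - \mathbf{1}\, z^\top/S$, and the matrix-determinant lemma applied to its $(r,r)$-principal minor immediately returns $z_r/S$.
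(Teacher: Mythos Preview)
Your proof is correct and follows precisely the route the paper points to: the lemma is stated there as a classical result ``immediate from Pr\"ufer's celebrated proof of Cayley's formula,'' and your argument supplies exactly those details via the Pr\"ufer bijection and the child-count/degree bookkeeping. The Matrix-Tree cross-check you append is also valid and gives a nice independent confirmation.
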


\begin{example}
The case when $H$ has three vertices is shown in Figure~\ref{fig:varphi_G}.
Substituting $x_{ij}=z_j\,(z_a+z_b+z_r)^{-1}$, we get
\[
\varphi_H=x_{ar}x_{br}+x_{ar}x_{ba}+x_{br}x_{ab}
=(z_r^2+z_rz_a+z_rz_b)(z_a+z_b+z_r)^{-2}
=z_r(z_a+z_b+z_r)^{-1}.
\]
\end{example}

\begin{proof}[Proof of Lemma~\ref{lem:star-mesh-directed}]
The proof uses standard techniques of elementary enumerative combinatorics.
As equation~\eqref{eq:directed-star-mesh-gf} is equivalent to
\begin{equation}
\label{eq:directed-star-mesh-gf-G'}
\varphi_G= (y_1+\cdots+y_k)\,\varphi_{G'}\,,
\end{equation}
we will be proving the latter identity.

The edge set $E$ of $G$ naturally splits into two disjoint subsets.
The $2k$ edges $v_i\rightarrow v$ and $v\rightarrow v_i$
form $\operatorname{Star}_v$ (the \emph{star}  of~$v$).
The remaining edges form the set $\operatorname{Out}_v=E\setminus\operatorname{Star}_v$.
Similarly, the edge set $E'$ of $G'$ is a disjoint union of
$\operatorname{Mesh}_v\!=\!\{e_{ij}\}$ (the \emph{mesh} of~$v$)
and~$\operatorname{Out}_v\,$.

We shall write $\varphi_G$ (resp.,~$\varphi_{G'}$) as a sum of terms of the form
$AB$ where $A$ is a polynomial expression in the weights of the edges in $\operatorname{Star}_v$
(resp., $\operatorname{Mesh}_v$) while $B$ only involves the weights of
edges in~$\operatorname{Out}_v$.
Each factor $B$ will be a generating function for a certain class of directed forests
in~$\operatorname{Out}_v$.
(Think of those forests as leftover chunks of a directed tree~after~its edges in
$\operatorname{Star}_v$
(resp., $\operatorname{Mesh}_v$) have been removed.)
More specifically, the factors $B$ in our formulas will be of the following kind.
Let $\Pcal\!=\!\{P_a\}$ be an (unordered) partition of the set
\[
K=\{v_1,\dots,v_k\}\cup\{r\}
\]
into nonempty subsets~$P_a$ (called \emph{blocks}) where in each block~$P_a$,
one vertex $a$ has been designated as the \emph{root} of the block.
If $r\in P_a$ (i.e., if the block contains the root of~$G$), then we require that $a=r$;
moreover $P_r$ must contain at least one of the elements $v_1,\dots,v_k$.
We denote by $B(\Pcal)$ the generating function for the directed forests~$F$
which span the vertex set~$V\setminus\{v\}$ and
have the property that the vertices in~$K$
are distributed among the connected components of~$F$ as prescribed by~$\Pcal$.
More precisely, each connected component $C$ of~$F$ is a directed tree whose vertex set
includes all vertices from some block~$P_a$ of~$\Pcal$ (and no
vertices from other blocks),
with $a$ serving as the root of~$C$.
(In particular, $C$ contains at least one of the vertices $v_1,\dots,v_k$.)
The weight of $F$ is the product of the weights of its edges.

To complete the proof, we are going to write formulas of the form
\begin{align}
\label{eq:AB}
\varphi_G&=\textstyle\sum_\Pcal A(\Pcal) B(\Pcal)\\
\label{eq:A'B}
\varphi_{G'}&=\textstyle\sum_\Pcal A'(\Pcal) B(\Pcal)
\end{align}
(sums over rooted set partitions $\Pcal$ as above)
and demonstrate that for any~$\Pcal$, we have
\begin{equation}
\label{eq:A=YA'}
A(\Pcal)=(y_1+\cdots+y_k)\,A'(\Pcal).
\end{equation}

Let $\Pcal=\{P_a\}$ be a partition of $K$ as above.
For each block $P_a$,
denote $Y_a=\sum_{v_i\in P_a} y_i$, the sum of the weights
of the edges $v\rightarrow v_i$ entering the block~$P_a$.
The edges of each directed tree in~$G$ contributing to $\varphi_G$
split into those contained in $\operatorname{Star}_v$
and those belonging to $\operatorname{Out}_v\,$.
The latter edges form a directed spanning forest in~$V\setminus\{v\}$
whose connected components, with their roots identified,
correspond to a partition~$\Pcal$ as above.
Direct inspection shows that combining the terms in $\varphi_G$
corresponding to each~$\Pcal$ yields the formula~\eqref{eq:AB} with
\[
A(\Pcal)=Y_r\prod_{a\neq r} x_a\,.
\]
An analogous---if less straightforward---calculation for the graph~$G'$,
with $\operatorname{Star}_v$ replaced \linebreak[3]
by~$\operatorname{Mesh}_v$,
results in the formula~\eqref{eq:A'B} with
\[
A'(\Pcal)=\sum_T \prod_{P_a\rightarrow P_b} x_a\,Y_b\,(y_1+\cdots+y_k)^{-1}
\,,
\]
where the sum is over all directed trees~$T$ on the vertex
set~$\{P_a\}$,
with root~$P_r$ 
(i.e., the vertices of $T$ are the blocks of~$\Pcal$),
and the product is over all directed edges $P_a\rightarrow P_b$ in~$T$.
We note that $\sum_{P_a} Y_a=y_1+\cdots+y_k$.
Thus Lemma~\ref{lem:cayley-prufer} applies, and we get
\[
A'(\Pcal)=Y_r\, (y_1+\cdots+y_k)^{-1}\prod_{a\neq r} x_a\,,
\]
implying~\eqref{eq:A=YA'}.
\end{proof}


\section{Subtraction-free complexity vs.\ ordinary complexity}
\label{sec:quadratic}

In this section, we exhibit a sequence of rational functions $(f_n)$
whose ordinary arithmetic circuit complexity is linear in~$n$
(or even $O(1)$ if one allows arbitrary constants as inputs)
while their subtraction-free complexity grows exponentially in~$n$.

\begin{lemma}
\label{lem:denom-to-complexity}
Let $F$ be a rational function (in one or several variables) 
representable as a ratio of polynomials with nonnegative coefficients.
Assume that in any such representation $F=P/Q$,
the (total) degree of~$P$ is greater than~$2^m$. 
Then the subtraction-free complexity of~$F$ is greater than~$m$.
\end{lemma}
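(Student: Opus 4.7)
The plan is to prove the contrapositive: if the subtraction-free complexity of $F(x)$ is at most~$m$, then $F$ can be represented as a ratio $P/Q$ of nonnegative polynomials with $\deg Q\le 2^m$. The argument is a routine induction on the gates of a subtraction-free circuit computing~$F(x)$.

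Fix such a circuit with $k\le m$ gates, and list its gates as $g_1,\dots,g_k$ in a topological order. I will show by (strong) induction on~$i$ that the rational function $h_i$ computed at~$g_i$ admits a representation $h_i=P_i/Q_i$ in which $P_i$ and $Q_i$ are polynomials with nonnegative coefficients and $\max(\deg P_i,\deg Q_i)\le 2^i$. The base case reduces to the observation that each input of the circuit (a variable or a positive integer scalar) admits such a representation with maximum degree $\le 1=2^0$.

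For the inductive step, gate~$g_i$ applies one of the operations $+,\ *,\ /$ to the outputs $h_j$ and $h_l$ of two earlier gates (or inputs) with $j,l<i$. By the inductive hypothesis we already have representations $h_j=P_j/Q_j$ and $h_l=P_l/Q_l$ of the required form, with all four polynomial degrees bounded by $2^{i-1}$. The standard combining identities
\[
h_j+h_l=\frac{P_jQ_l+P_lQ_j}{Q_jQ_l},\qquad
h_j\cdot h_l=\frac{P_jP_l}{Q_jQ_l},\qquad
\frac{h_j}{h_l}=\frac{P_jQ_l}{Q_jP_l}
\]
preserve nonnegativity of coefficients---this is the one place where the ban on subtraction is essential---and in each of the three cases both resulting polynomials have degree at most $2^{i-1}+2^{i-1}=2^i$, closing the induction.

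Applying the claim to the output gate~$g_k$ yields a representation $F=P_k/Q_k$ with nonnegative $P_k,Q_k$ and $\deg Q_k\le 2^k\le 2^m$, contradicting the standing hypothesis that every such representation of~$F$ satisfies $\deg Q>2^m$. There is no genuine obstacle here: the entire argument is a careful bookkeeping of degrees under the three subtraction-free operations, relying only on the fact that the class of ratios of polynomials with nonnegative coefficients is closed under $+,\ *,\ /$, with each binary operation at most doubling the maximum degree of numerator and denominator.
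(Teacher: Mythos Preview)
Your proof is correct and follows essentially the same approach as the paper: the paper defines $\mathcal{D}_k(x)$ as the set of rational functions expressible as a ratio of nonnegative polynomials of degree at most~$k$, observes that each of $+,\,*,\,/$ maps $\mathcal{D}_k\times\mathcal{D}_k$ into $\mathcal{D}_{2k}$, and concludes that a subtraction-free circuit with $h$ gates computes a function in $\mathcal{D}_{2^h}$. Your explicit induction on the gates is just an unpacked version of this same degree-doubling bookkeeping.
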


\begin{proof}
Let $\mathcal{D}_k$ denote the class of rational functions $f$
which can be written in the form $f=p/q$ where both $p$
and $q$ have nonnegative coefficients and have degrees at
most~$k$. It is easy to see that if $f_1,f_2\in \mathcal{D}_k$,
then each of the functions $f_1+f_2$, $f_1 f_2$, and $f_1/f_2$
lie~in~$\mathcal{D}_{2k}$. It follows that if $F$ has
subtraction-free complexity~$l$, then $F\in\mathcal{D}_{2^l}(x)$.
On the other hand, the conditions in the lemma imply that
$F\notin\mathcal{D}_{2^m}$. Hence $l>m$.
\end{proof}


\begin{lemma}
\label{lem:F_alpha-is-SF}
For a positive integer~$N$, the quadratic univariate polynomial 
\[
F_N(x)=(x-1)^2+\frac{1}{N^2} 
\]
can be written as a subtraction-free expression. 
Furthermore, if $F_N(x) Q(x)=P(x)$ where $P(x)$ is a polynomial with nonnegative coefficients,
then $\deg(P)>N$. 
\end{lemma}

\begin{proof}
By a classical theorem of P\'olya~\cite{Polya}, 
the fact that $F_N(x)>0$ for any $x\ge 0$ (actually, any~$x\in\RR$) implies that we can write $F_N(x)=p(x)/(1+x)^r$
for $r$ a sufficiently large integer, and~$p(x)$ a polynomial with nonnegative coefficients.
(It can be shown that $r>9N^2$ suffices, cf.\ \cite[p.~222]{Powers-Reznick}.)

Let us prove the second statement. 
Assume that on the contrary, $\deg(P)\le N$, and denote
$P(x)=\sum_{k=0}^N p_k x^k$.
Let $u=1+\sqrt{-1}/N$ and $v=1-\sqrt{-1}/N$ be the roots~of~$F_N$.
Then 
\[
u^k+v^k=2\biggl(1+\frac{1}{N^2}\biggr)^{k/2} \cos\biggl(k\cdot \tan^{-1}\biggl(\frac{1}{N}\biggr)\biggr). 
\]
If $0\le k\le N$, then $0\le k\tan^{-1}\bigl(\frac{1}{N}\bigr)\le \frac{k}{N}\le 1<\frac{\pi}{2}$, 
implying that $u^k+v^k>0$. 
Consequently
\[
0=F_N(u) Q(u)+F_N(v) Q(v)
=P(u)+P(v)
=\sum_{k=0}^N p_k (u^k+v^k)>0, 
\]
a contradiction. 
\end{proof}

\begin{proposition}
\label{prop:f_n-lower-bound} 
The subtraction-free complexity of the
univariate polynomial 
\[
G_n(x)=F_{2^{2^n}}(x)=(x-1)^2+2^{-2^{n+1}}, 
\]
while finite, is greater than $2^n$.
The ordinary arithmetic circuit complexity of $G_n(x)$ is $O(1)$ if arbitrary constants are allowed as
inputs. If $1$ is the only input constant allowed, the ordinary complexity of $G_n(x)$ is~$O(n)$. 
\end{proposition}

\begin{proof}
By Lemma~\ref{lem:F_alpha-is-SF}, 
the subtraction-free complexity of $G_n$ is finite,
and for any representation $G_n=P/Q$ where $P$ and $Q$ are polynomials with 
nonnegative coefficients, we have $\deg(P)>2^{2^n}$. 
Now Lemma~\ref{lem:denom-to-complexity} implies that 
subtraction-free complexity of~$G_n$ is greater than~$2^n$. 
Finally, the last statement of the proposition follows from the fact that $2^{2^n}$ can be computed
by iterated squaring. 
\end{proof}

The reader might feel uncomfortable about the fact that the polynomial~$G_n(x)$
in Proposition~\ref{prop:f_n-lower-bound} has a coefficient whose binary notation has exponential length.
To alleviate those concerns, we present a closely related example
that does not have this drawback.
In doing so, we use a modification of the well-known Lazard-Mora-Philippon trick,
cf., e.g.,~\cite{Grigoriev-Vorobjov}.

\begin{proposition}
\label{prop:g_n-lower-bound} 
Define the homogeneous polynomials
$H_n(t,x_1,\dots,x_n)
$ by
\begin{align*}
H_n(t,x_1,\dots,x_n)=&(x_1-t)^4+(x_1-2 x_2)^4\\
&+(x_2^2-t x_3)^2+(x_3^2-t x_4)^2
+\cdots+ (x_{n-1}^2-t x_n)^2+4(x_1-t)^2 x_n^2+2 x_n^4\,.
\end{align*}
%
Then the subtraction-free complexity of $H_n$, while finite, is greater than~$2^{n-2}$. 
By contrast,
the ordinary arithmetic circuit complexity of $H_n$ is linear
in~$n$.
\end{proposition}

\begin{proof}
Since $H_n(t,x_1,\dots,x_n)$ is positive for any nonnegative (in fact, any real) vector
$(t,x_1,\dots,x_n)\neq (0,0,\dots,0)$, 
P\'olya's theorem~\cite{Polya} tells us that we can write
\[
H_n(t,x_1,\dots,x_n)=p(t,x_1,\dots,x_n)/(t+x_1+\cdots+x_n)^r, 
\]
for some polynomial $p$ with nonnegative coefficients, and some positive integer~$r$. 
So the subtraction-free complexity of $H_n$ is finite.

Assume that $H_n=P/Q$ where $P$ and $Q$ are polynomials  with nonnegative coefficients. 
Substituting $t=1,x_2=2^{-1},x_3=2^{-2},\dots,x_n=2^{-2^{n-2}}$, we get:
\begin{align*}
\frac{P(1,x_1,2^{-1},2^{-2},\dots,2^{-2^{n-2}})}{Q(1,x_1,2^{-1},2^{-2},\dots,2^{-2^{n-2}})}
&=H_n(1,x_1,2^{-1},2^{-2},\dots,2^{-2^{n-2}})\\
&=(x_1-1)^4+(x_1-1)^4+4(x_1-1)\cdot 2^{-2^{n-1}}+2\cdot 2^{-2^n}\\
&=2(F_{2^{2^{n-2}}}(x_1))^2. 
\end{align*}
Since $P(1,x_1,2^{-1},2^{-2},\dots,2^{-2^{n-2}})$ is a polynomial with nonnegative coefficients,
we can apply Lemma~\ref{lem:F_alpha-is-SF}
to conclude that $\deg(P)\ge \deg_{x_1}(P)>2^{2^{n-2}}$. 
Now Lemma~\ref{lem:denom-to-complexity} implies that 
the subtraction-free complexity of $H_n$ is greater than~$2^{n-2}$. 
%
\end{proof}


\subsection*{Acknowledgements}
We thank Leslie Valiant for bringing the paper~\cite{Jerrum-Snir} to our attention.

\pagebreak[3]


\end{document}